\documentclass[11pt, reqno]{amsart}
\usepackage{indentfirst, amssymb, amsmath, amsthm, mathrsfs, setspace, indentfirst, enumerate,  mathrsfs, amsmath, amsthm}
\usepackage[bookmarksnumbered, colorlinks, plainpages]{hyperref}
\usepackage{pdfpages}
\usepackage{mathrsfs}
\usepackage{cite}
\usepackage{graphicx}
\usepackage{float}
\newtheorem*{theo4.A}{Theorem 4.A}
\newtheorem*{theo4.B}{Theorem 4.B}

\newtheorem*{theoA}{Theorem 1.A}
\newtheorem*{theoB}{Theorem 1.B}
\newtheorem*{theoC}{Theorem 1.C}

\newtheorem*{theo2.A}{Theorem 2.A}
\newtheorem*{theo2.B}{Theorem 2.B}

\newtheorem*{theo3.A}{Theorem 3.A}
\newtheorem*{theo3.B}{Theorem 3.B}

\newtheorem*{que3.1}{Question 3.1}
\newtheorem*{cor A}{Corollary A}
\newtheorem*{cor B}{Corollary B}
\newtheorem*{Que1}{Question 1.1}
\newtheorem*{Que2}{Question 1.2}

\newtheorem{theo}{Theorem}[section]
\newtheorem{lem}{Lemma}[section]

\newtheorem{exm}{Example}[section]
\newtheorem{defi}{Definition}[section]
\newtheorem{rem}{Remark}[section]

\newcommand{\ol}{\overline}
\newcommand{\be}{\begin{equation}}
\newcommand{\ee}{\end{equation}}
\newcommand{\beas}{\begin{eqnarray*}}
\newcommand{\eeas}{\end{eqnarray*}}
\newcommand{\bea}{\begin{eqnarray}}
\newcommand{\eea}{\end{eqnarray}}

\numberwithin{equation}{section}
\begin{document}
\title[R\MakeLowercase{efined} B\MakeLowercase{ohr Inequalities and } B\MakeLowercase{ohr Radii involving}
S\MakeLowercase{chwarz mappings for Holomorphic functions in }B\MakeLowercase{anach Spaces and }...]{
\LARGE R\MakeLowercase{efined} B\MakeLowercase{ohr Inequalities and } B\MakeLowercase{ohr Radii involving}
S\MakeLowercase{chwarz mappings for Holomorphic functions in }B\MakeLowercase{anach Spaces and }JB$^*$-\MakeLowercase{triples}
}
\date{}
\author[N. S\MakeLowercase{arkar} and P. D\MakeLowercase{as}]{N\MakeLowercase{abadwip} S\MakeLowercase{arkar} \MakeLowercase{and} P\MakeLowercase{radip} D\MakeLowercase{as}}

\address{Amity School of Applied Sciences, Amity University Mumbai, Panvel, Navi Mumbai, Maharashtra-410206, India.}
\email{nsarkar@mum.amity.edu, nabadwipsarkar52@gmail.com}
\address{Department of Mathematics, Raiganj University, Raiganj, West Bengal-733134, India.}
\email{pradipsmath@gmail.com}

\renewcommand{\thefootnote}{}
\footnote{2020 \emph{Mathematics Subject Classification}: 32A05, 32A10, 32K05, 32M15.}
\footnote{\emph{Key words and phrases}:Bohr radius, Holomorphic mappings, Homogeneous polynomial expansion,
Complex Banach space.}
\footnote{*\emph{Corresponding Author}: Pradip Das}
\renewcommand{\thefootnote}{\arabic{footnote}}
\setcounter{footnote}{0}

\begin{abstract}
In this paper, we investigate refined Bohr-type inequalities involving Schwarz mappings for holomorphic mappings defined on the unit ball \( B_X \) of a complex Banach space. We first establish refined Bohr inequalities for scalar-valued holomorphic mappings \( F : B_X \to \mathbb{U} \). We then extend some results to vector-valued holomorphic mappings involving Schwarz mappings of the form \( F : B_X \to \overline{\mathbb{U}}^{\,n} \), \( n \ge 2 \), where improved Bohr radii are obtained under suitable norm conditions.

Furthermore, we study the Bohr radius involving Schwarz mappings for holomorphic mappings whose ranges are contained in the closure of the unit ball of a JB$^*$-triple. Our results generalize the results of Jia \textit{et al.} \cite{JLP}, Liu \textit{et al.} \cite{LLP2021} and Liu \textit{et al.} \cite{LSX2018}. 
\end{abstract}

\thanks{Typeset by \AmS -\LaTeX}
\maketitle
\section{{\bf Introduction}}

Let $H^\infty$ denote the space of all bounded holomorphic functions on the unit disk $\mathbb{U}=\{z\in\mathbb{C}:\ |z|<1\}.$ Equipped with the norm $\|f\|_\infty:=\sup_{z\in\mathbb{D}}|f(z)|,$ the space $H^\infty$ is a Banach space.\par

In 1914, Harald Bohr \cite{Bohr1914} established a fundamental result asserting that for any function $f(z)=\sum_{n=0}^{\infty} a_n z^n$ belonging to the space $H^\infty$, then the inequality
\[
|a_0|+\sum_{n=1}^{\infty}|a_n|\, r^n \le \|f\|_\infty
\]
for \( 0\le r\le \frac{1}{6} \), where the coefficients are given by \( a_n=f^{(n)}(0)/n! \) for \( n\ge 0 \). Subsequently, Riesz, Schur and Wiener independently proved that this inequality holds for the wider range $0\le r\le \tfrac13$, and the number $\tfrac13$ cannot be improved. This inequality is known as the classical Bohr inequality and the constant $\tfrac13$ is called the \emph{Bohr radius}, as the function
\[
\varphi_a(z)=\frac{a-z}{1-az}\qquad(|a|<1),
\]
from the group of automorphisms of the unit disk shows.

Although Bohr's original paper \cite{Bohr1914} already includes Wiener's argument establishing that the Bohr radius equals $\tfrac13$, numerous alternative proofs have since been developed. For the classical planar setting, comprehensive accounts may be found in the survey articles \cite{AAP2017,GMR2018} and the references cited therein. It should be emphasized that no extremal function in $H^\infty$ attains the Bohr radius exactly equal to $\tfrac13$ (see \cite{AKP2019,GMR2018,KP2017}). In recent years, Bohr's theorem has inspired a wide range of extensions, refinements, and variants in diverse analytic contexts; see, for instance, \cite{AA2013,BD2018,BB2004,KS2022} and the references therein.

In recent years, many authors have extended this problem to functions of several complex variables (cf.~\cite{AAH2026,LLP2023,LL2020} and the references therein). One of the aims of this paper is to generalize or improve multidimensional analogues of refined Bohr inequalities involving Schwarz functions and to prove that these refinements are best possible.\par
Let $\mathcal{B}$ denotes the set of all analytic self-maps of the unit disk  $\mathbb{U}=\{z\in\mathbb{C}:\ |z|<1\}.$

More recently, Liu et. al \cite{LSX2018} obtained the following
result on the Bohr radius for analytic functions.
\begin{theoA}\cite[Lemma 2.8]{LSX2018} Suppose that  $f\in \mathcal{B}$ and $f(z)=\sum_{k=0}^{\infty} a_k z^k$. Then
\begin{equation*}\label{eq:2.15}
|f(z)|+\sum_{k=1}^{\infty} |a_{2k}|\,|z|^{2k} \le 1,
\quad \text{for } |z|=r \le \sqrt{2}-1.
\end{equation*}
Moreover, the radius $r=\sqrt{2}-1$ is best possible.

\end{theoA}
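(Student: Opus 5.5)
We would prove Theorem 1.A by combining the Schwarz--Pick estimate for $|f(z)|$ with a bound on the even coefficients obtained from the even part of $f$.

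Write $a=|a_0|$ and $|z|=r$. Schwarz--Pick gives
\[
|f(z)|\le \frac{a+r}{1+ar}.
\]
For the even coefficients, set
\[
g(w)=\tfrac12\bigl(f(\sqrt w)+f(-\sqrt w)\bigr)=\sum_{k\ge0}a_{2k}w^k .
\]
This is a well-defined analytic self-map of $\mathbb{U}$ with $g(0)=a_0$. The Wiener-type estimate $|b_k|\le 1-|b_0|^2$ for $g$ then gives $|a_{2k}|\le 1-a^2$ for $k\ge1$. Hence
\[
\sum_{k\ge1}|a_{2k}|r^{2k}\le (1-a^2)\frac{r^2}{1-r^2}.
\]

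It therefore suffices to show that
\[
\Phi(a):=\frac{a+r}{1+ar}+(1-a^2)\frac{r^2}{1-r^2}\le 1
\qquad\text{for } a\in[0,1],\ r\le\sqrt2-1 .
\]
Since
\[
1-\frac{a+r}{1+ar}=\frac{(1-a)(1-r)}{1+ar},
\]
dividing by $1-a$ (the case $a=1$ being trivial) reduces the inequality to
\[
(1+a)\frac{r^2}{1-r^2}\le \frac{1-r}{1+ar},
\quad\text{i.e.}\quad
(1+a)(1+ar)r^2\le (1-r)^2(1+r).
\]
The left side is increasing in $a$, so the worst case is $a\to1$. There the condition becomes $2r^2(1+r)\le(1-r)^2(1+r)$, that is $2r^2\le(1-r)^2$, or $\sqrt2\,r\le1-r$. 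This is exactly $r\le\sqrt2-1$.

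For sharpness, take $f=\varphi_a$ with $a\to1^-$, evaluated at $z=-r$, which makes $|f(-r)|=\frac{a+r}{1+ar}$. Its coefficients satisfy $|a_{2k}|=(1-a^2)a^{2k-1}$. The left-hand side is then
\[
\frac{a+r}{1+ar}+(1-a^2)\frac{a r^2}{1-a^2r^2}.
\]
Subtracting 1 and dividing by $1-a$, the sign is that of
\[
-\frac{1-r}{1+ar}+\frac{(1+a)ar^2}{1-a^2r^2}.
\]
As $a\to1$ this tends to $-\frac{1}{1+r}\bigl((1-r)^2-2r^2\bigr)\cdot\frac{1}{1-r}$, which is positive when $r>\sqrt2-1$. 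So for such $r$ the inequality fails for $a$ close to 1, and $\sqrt2-1$ cannot be improved.

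The main obstacle is the coefficient estimate for the even part, where one must check that $g$ is a genuine self-map, and keeping the sign bookkeeping correct in the extremal limit $a\to1$.
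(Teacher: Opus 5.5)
Your proof is correct and follows essentially the same route as the paper, which obtains Theorem 1.A as the case $p=1$, $\mu_m(z)=\mu_k(z)=z$ of Theorem~\ref{T2.1}. That route combines a Schwarz--Pick bound for $|f(z)|$ with the coefficient bound $1-|a_0|^2$, reduces to the worst case $|a_0|\to1$ (the paper invokes Lemma~\ref{L4}, which for $p=1$ is exactly your monotonicity computation), and proves sharpness via the disk automorphism evaluated at $-r$ with $a\to1^-$. The one difference is that your even-part function $g$ is an unnecessary detour: Wiener's estimate $|a_n|\le 1-|a_0|^2$ already holds directly for every $n\ge1$, and that is what the paper uses.
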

Based on the initiation of Kayumov and Ponnusamy \cite{KP1}, several forms of Bohr-type inequalities for the family $\mathcal{B}$ were considered in \cite{LSX2018} when the Taylor coefficients of the classical Bohr inequality are partly or completely replaced by higher order derivatives of $f$. Here we only recall one of them.
\begin{theoB}\cite[Theorem 2.1]{LSX2018}
Suppose that $f \in \mathcal{B}$ and $f(z)=\sum_{n=0}^{\infty} a_n z^n .$
Then the following sharp inequality holds:
\begin{equation*}\label{eq:F}
|f(z)| + |f'(z)|\, r + \sum_{n=2}^{\infty} |a_n|\, r^n \le 1,
\quad \text{for } |z|=r \le \frac{\sqrt{17}-3}{4}.
\end{equation*}
\end{theoB}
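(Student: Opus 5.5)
The plan is to bound each of the three terms by a function of $a=|a_0|\in[0,1)$ and $r$ alone, using the standard Schwarz–Pick type estimates for $f\in\mathcal{B}$, and then reduce the inequality to a one-variable polynomial condition in $a$.

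\textbf{Step 1: the three estimates.} For $f\in\mathcal{B}$ with $|a_0|=a$ and $|z|=r$ we will use:
\begin{itemize}
\item the Schwarz--Pick bound for the modulus,
\[
|f(z)|\le \frac{r+a}{1+ar};
\]
\item the Schwarz--Pick bound for the derivative,
\[
|f'(z)|\le \frac{1-|f(z)|^2}{1-r^2};
\]
\item Wiener's coefficient inequality $|a_n|\le 1-a^2$ for $n\ge1$, which gives
\[
\sum_{n\ge2}|a_n|r^n\le (1-a^2)\frac{r^2}{1-r}.
\]
\end{itemize}

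\textbf{Step 2: reduction to a monotone function of $|f(z)|$.} Writing $t=|f(z)|$, we get
\[
|f(z)|+r|f'(z)|+\sum_{n\ge2}|a_n|r^n \le t+\frac{r(1-t^2)}{1-r^2}+(1-a^2)\frac{r^2}{1-r}.
\]
The function $t\mapsto t+\frac{r}{1-r^2}(1-t^2)$ has derivative $1-\frac{2rt}{1-r^2}$. This is nonnegative for $t\le 1$ as soon as $r^2+2r-1\le0$, that is, $r\le\sqrt2-1$. Since $(\sqrt{17}-3)/4\approx0.28<\sqrt2-1$, the function is increasing on $[0,1]$. We may therefore substitute the bound $t\le (r+a)/(1+ar)$. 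A direct computation gives
\[
1-t^2=\frac{(1-a^2)(1-r^2)}{(1+ar)^2}.
\]

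\textbf{Step 3: the inequality in $a$.} After substitution, the left-hand side is at most
\[
\frac{r+a}{1+ar}+\frac{r(1-a^2)}{(1+ar)^2}+\frac{(1-a^2)r^2}{1-r}.
\]
We have $1-\frac{r+a}{1+ar}=\frac{(1-a)(1-r)}{1+ar}$. Dividing the required inequality by $1-a>0$, it becomes
\[
\frac{r(1+a)}{(1+ar)^2}+\frac{(1+a)r^2}{1-r}\le \frac{1-r}{1+ar}.
\]
Equivalently, multiplying by $(1+ar)^2/(1+a)$,
\[
r+\frac{r^2(1+ar)^2}{1-r}\le \frac{(1-r)(1+ar)}{1+a}.
\]

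\textbf{Step 4: the extremal value $a\to1$ (main obstacle).} The hard part is to check that the worst case is $a\to1^-$. As $a$ increases, the left side increases and the right side, $\frac{(1-r)(1+ar)}{1+a}$, decreases because $r<1$. So the condition is hardest at $a=1$, where it reads
\[
r+\frac{r^2(1+r)^2}{1-r}\le\frac{1-r}{2}.
\]
Equivalently,
\[
2r^2(1+r)^2\le (1-r)(1-3r).
\]
This quartic does not obviously factor to the quadratic $2r^2+3r-1=0$ whose root is $(\sqrt{17}-3)/4$. So my estimate for the tail may be too crude. The fix I would try first is the sharper Carlson-type bound
\[
\sum_{n\ge2}|a_n|r^n\le \frac{(1-a^2)r^2}{(1+a)(1-r)}\cdot(\text{correction})
\]
obtained from $\sum_{n\ge1}|a_n|r^n\le r(1-a^2)/(1-ar)$ for $r\le 1/3$, subtracting the $|a_1|r$ term via $|a_1|\le 1-a^2$ appropriately. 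I would recheck Step 3 with it until the limiting condition at $a=1$ becomes $2r^2+3r-1\le0$.

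\textbf{Sharpness.} Take $f(z)=\frac{a-z}{1-az}$ with $a\to1^-$ and $z=-r$ (or the appropriate sign). For this family the estimates in Step 1 are asymptotically attained, so the radius cannot be enlarged.
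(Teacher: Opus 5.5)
Your Steps 1--3 are correct, and they follow the same route the paper takes for Theorem~\ref{T0.1}. The paper only quotes Theorem 1.B, but that proof, specialised to $B_X=\mathbb{U}$ and $\mu_m=\mu_k=\mathrm{id}$ with the squared-coefficient term dropped, is exactly your argument. It uses Schwarz--Pick for $|f(z)|$ and $|f'(z)|$, the monotonicity of $t\mapsto t+\frac{r}{1-r^2}(1-t^2)$ for $r\le\sqrt2-1$, and Wiener's bound for the tail. Your monotonicity-in-$a$ argument in Step 4 is also valid, and it replaces the paper's computation of $\partial D_1/\partial x\ge 0$ neatly.

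The gap is an algebra slip in the last substitution. At $a=1$ the right-hand side $\frac{(1-r)(1+ar)}{1+a}$ equals $\frac{(1-r)(1+r)}{2}$, not $\frac{1-r}{2}$. Because of this slip you concluded the tail estimate was too crude, so the proposal stops short of a proof and ends with an unspecified ``Carlson-type correction''. With the correct value, multiplying by $2(1-r)$ gives
\[
2r(1-r)+2r^2(1+r)^2\le (1-r)^2(1+r)
\iff 2r^4+3r^3+r^2+3r-1=(r^2+1)(2r^2+3r-1)\le 0 .
\]
This holds exactly for $r\le(\sqrt{17}-3)/4$, and it is equation \eqref{1.2} with $m=k=1$. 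So your original estimates already suffice, and the detour should be deleted.

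No sharper tail bound could have changed the radius anyway. The radius is decided in the limit $a\to1^-$, and there the bound $(1-a^2)r^2/(1-r)$ is asymptotically attained by the extremal Möbius map. For sharpness, make this explicit. For $f(z)=\frac{a-z}{1-az}$ at $z=-r$, both Schwarz--Pick bounds are attained exactly and the tail is $(1-a^2)\frac{ar^2}{1-ar}$, so the left-hand side equals
\[
1+(1-a)\left[-\frac{1-r}{1+ar}+\frac{(1+a)r}{(1+ar)^2}+\frac{(1+a)ar^2}{1-ar}\right].
\]
The bracket tends to
\[
-\frac{1-r}{1+r}+\frac{2r}{(1+r)^2}+\frac{2r^2}{1-r}=\frac{(r^2+1)(2r^2+3r-1)}{(1+r)^2(1-r)},
\]
which is positive for $r>(\sqrt{17}-3)/4$. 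Once the slip is corrected, your proof is complete.
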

In 2021, Liu et al. \cite{LLP2021} established a refined version of Bohr's inequality, which significantly extends the classical Bohr phenomenon by incorporating sharper coefficient estimates.
\begin{theoC}\cite[Theorem 7]{LLP2021}
Suppose that $f$ is an analytic function in $\mathbb{U}$ such that $|f(z)|<1$ for $z\in \mathbb{U}$  and
\[
f(z)=\sum_{n=0}^{\infty} a_n z^n .
\]
Then
\[
I(z)
:= |f(z)| + |f'(z)|\, r
+ \sum_{n=2}^{\infty} |a_n| r^n
+ \left(\frac{1}{1+|a_0|}+\frac{r}{1-r}\right)
\sum_{n=1}^{\infty} |a_n|^2 r^{2n}
\le 1
\]
for $|z|=r\le \dfrac{\sqrt{17}-3}{4}$, and the constant $\dfrac{\sqrt{17}-3}{4}$ is best possible. Moreover 

\[
J(z)
:= |f(z)|^2 + |f'(z)|\, r
+ \sum_{n=2}^{\infty} |a_n| r^n
+ \left(\frac{1}{1+|a_0|}+\frac{r}{1-r}\right)
\sum_{n=1}^{\infty} |a_n|^2 r^{2n}
\le 1
\]
for $|z|=r\le r_0$, where $r_0 \approx 0.385795$ is the unique positive root of the equation
\[
1 - 2r - r^2 - r^3 - r^4 = 0.
\]
The constant $r_0$ is best possible.
\end{theoC}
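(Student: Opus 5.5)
We aim to prove Theorem 1.C with the standard toolkit for refined Bohr inequalities on $\mathcal{B}$. Write $a=|a_0|$. The first tool is the refined coefficient estimate (the lemma behind the ``refined'' Bohr inequalities):
\[
\sum_{n=1}^{\infty}|a_n|r^n+\left(\frac{1}{1+a}+\frac{r}{1-r}\right)\sum_{n=1}^{\infty}|a_n|^2r^{2n}\le (1-a^2)\frac{r}{1-r}.
\]
We would prove it from $|a_n|\le 1-a^2$ together with $\sum|a_n|^2\le 1-a^2$, via a Cauchy--Schwarz split of the partial sums. The second tool is the Schwarz--Pick lemma in the forms
\[
|f(z)|\le \frac{a+r}{1+ar},\qquad |f'(z)|\le \frac{1-|f(z)|^2}{1-r^2}.
\]
To apply the coefficient estimate we need the tail from $n=2$. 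So we split off the $n=1$ terms and use the sharper bound $|a_1|\le 1-a^2$ on $|a_1|r$. Alternatively, we apply the refined lemma to $(f-a_0-a_1z)/z$; this is likely cleaner.

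The key steps are as follows. Set $t=|f(z)|\in[0,(a+r)/(1+ar)]$. Then
\[
I(z)\le t+\frac{r(1-t^2)}{1-r^2}+\Phi(a,r),
\]
where $\Phi$ collects the tail and quadratic terms and is bounded by $(1-a^2)\frac{r}{1-r}$ minus a correction accounting for $|a_1|$. The map $t\mapsto t+\frac{r(1-t^2)}{1-r^2}$ is increasing when $2rt\le 1-r^2$. For $r\le(\sqrt{17}-3)/4\approx 0.28$ this holds for all $t\le 1$, so we may replace $t$ by $(a+r)/(1+ar)$. This reduces the problem to a one-variable inequality $G(a,r)\le 1$ for $a\in[0,1)$. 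For this we factor out $(1-a)$ and show that the remaining expression is nonpositive for $r\le(\sqrt{17}-3)/4$. The critical case is $a\to1^-$, where the condition becomes exactly $2r^2+3r-1\le 0$, whose root is $(\sqrt{17}-3)/4$. For $J$ the same procedure is used with $t^2$ in place of $t$. Then $t\mapsto t^2+\frac{r(1-t^2)}{1-r^2}$ is increasing whenever $r<1/\sqrt2$, hence again maximal at $t=(a+r)/(1+ar)$. The limit $a\to1$ then yields the polynomial $1-2r-r^2-r^3-r^4$.

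For sharpness we would take $f=\varphi_a$ with $z=r$ (after rotation, so that all coefficients contribute with full modulus). We compute $I$ explicitly and let $a\to1^-$. Expanding $I=1+(1-a)\,\Psi(r)+O((1-a)^2)$ shows that $I>1$ as soon as $\Psi(r)>0$, that is, for $r$ beyond the stated root. The same computation applies to $J$.

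The main obstacle is the monotonicity in $a$ of the reduced expression $G(a,r)$. We must show that the worst case really is $a\to1$ and not some interior $a$. The quadratic term $\frac{1}{1+a}\sum|a_n|^2r^{2n}$ makes the bound non-polynomial. We would first try to clear the denominators $(1+ar)(1-r^2)(1-r)(1+a)$ and check the sign of the resulting polynomial in $a$, verifying that its coefficients are of a single sign on $r\le r_0$. Failing that, we would compute $\partial_aG$ directly.
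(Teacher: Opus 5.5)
\textbf{The gap: the tail estimate from $n=2$.} Your architecture is the one used in the proofs of Theorems~\ref{T0.1}--\ref{T0.2}, which give Theorem~1.C when $B_X=\mathbb{U}$ and $\mu_m=\mu_k=\mathrm{id}$. It consists of Schwarz--Pick bounds for $|f(z)|$ and $|f'(z)|$, monotonicity in $t=|f(z)|$, and the limit $a\to1^-$. However, the step that makes the radii sharp is missing. What you need is
\[
\sum_{n=2}^{\infty}|a_n|r^n+\left(\frac{1}{1+a}+\frac{r}{1-r}\right)\sum_{n=1}^{\infty}|a_n|^2r^{2n}\le(1-a^2)\frac{r^2}{1-r},
\]
which is Lemma~\ref{L2} with $N=2$ (so $t=0$), exactly what the paper invokes. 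Neither of your routes produces it.
\begin{itemize}
\item Subtracting $|a_1|r$ from the $N=1$ inequality leaves $(1-a^2)\frac{r}{1-r}-|a_1|r$ on the right. The bound $|a_1|\le 1-a^2$ controls the subtracted term in the wrong direction. Your Schwarz--Pick bounds do not involve $a_1$, so the case $a_1=0$ leaves only $(1-a^2)\frac{r}{1-r}$. The limit $a\to1^-$ then gives $-\frac{1-r}{1+r}+\frac{2r}{(1+r)^2}+\frac{2r}{1-r}\le0$, i.e.\ $r^3+3r^2+5r-1\le0$ (root $\approx 0.18$), instead of $2r^2+3r-1\le0$. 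For $J$ it gives $2r^2+3r-1\le 0$ instead of $1-2r-r^2-r^3-r^4\ge0$. Your claim that the critical case yields exactly the stated polynomials presupposes the $r^2/(1-r)$ bound.
\item Passing to $(f-a_0-a_1z)/z$ fails too. It is not a self-map in general: for $f=\varphi_a$ it equals $-(1-a^2)\frac{az}{1-az}$, whose supremum is $a(1+a)>1$ for $a$ near $1$. Even when it is a self-map, the lemma applied to it loses both the factor $1-a^2$ and the term $|a_1|^2r^2$.
\end{itemize}

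\textbf{The refined lemma needs more than your two constraints.} You plan to prove the refined lemma from $|a_n|\le 1-a^2$ and $\sum_{n\ge1}|a_n|^2\le 1-a^2$ alone, and this cannot work. Take $a_0=0$, $a_1=a_2=1/\sqrt2$ and all other $a_n=0$. Both constraints hold, yet the $r^2$-coefficient on the left of the display is $\frac{1}{\sqrt2}+\frac12>1$. Similarly, $a_0=\tfrac12$, $|a_1|=\tfrac34$, $|a_2|=\tfrac{\sqrt3}{4}$ satisfies both constraints but violates the $N=1$ version at order $r^2$. The missing input is Carlson's sharp coefficient inequalities
\[
|a_{2n+1}|\le 1-\sum_{j=0}^{n}|a_j|^2\quad(n\ge0),\qquad |a_{2n}|\le 1-\sum_{j=0}^{n-1}|a_j|^2-\frac{|a_n|^2}{1+|a_0|}\quad(n\ge1).
\]
Multiply the bound for $|a_k|$ by $r^k$ and sum over $k\ge2$. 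Each $|a_j|^2$ with $j\ge1$ is then subtracted with weight $\frac{1}{1+|a_0|}$ at $k=2j$, and with full weight for every $k\ge2j+1$. Since $\sum_{k\ge 2j+1}r^k=\frac{r}{1-r}\,r^{2j}$, this gives the displayed $N=2$ estimate directly.

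\textbf{With the lemma in place.} Your argument becomes the paper's. The monotonicity in $a$ that you single out as the main obstacle is then immediate, because the $\frac{1}{1+a}$ term has already been absorbed. One gets $I\le 1+\frac{1-a}{1+ar}D_1(a,r)$ with
\[
D_1(a,r)=-(1-r)+\frac{(1+a)r}{1+ar}+\frac{(1+a)(1+ar)r^2}{1-r},
\]
and $J\le 1+(1-a^2)\bigl(\frac{r^2}{1-r}-\frac{1-r-r^2}{(1+ar)^2}\bigr)$. Both are visibly increasing in $a$.

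\textbf{Two smaller fixes.}
\begin{itemize}
\item The map $t\mapsto t^2+\frac{r(1-t^2)}{1-r^2}$ is increasing iff $r^2+r-1\le0$, not for all $r<1/\sqrt2$. This is harmless here, since $r_0<(\sqrt5-1)/2$.
\item For sharpness you must evaluate $\varphi_a$ at $z=-r$ (or use $(a+z)/(1+az)$ at $z=r$), so that $|f(z)|=\frac{a+r}{1+ar}$. The moduli $|a_n|$ do not depend on the rotation, but $|f(z)|$ and $|f'(z)|$ do.
\end{itemize}
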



In order to present our results in a clear and systematic manner, we first fix some
notation. For a point $z=(z_1,z_2,\ldots,z_n)\in\mathbb{C}^n$, we denote by $\|z\|_\infty:=\max_{1\le i\le n}|z_i|$
the supremum norm. The unit polydisc in $\mathbb{C}^n$ is then defined by
$\mathbb{U}^n:=\{z\in\mathbb{C}^n:\ \|z\|_\infty<1\}.$\par
Let $X$ and $Y$ be complex Banach spaces.
\begin{defi}
Let $k\in\mathbb{N}$. A mapping $P\colon X\to Y$ is called a \emph{homogeneous polynomial of degree $k$}
if there exists a $k$-linear mapping $u\colon X^k\to Y$ such that
\[
P(x)=u(x,\ldots,x), \qquad x\in X.
\]
\end{defi}

Note that if $P$ is a homogeneous polynomial of degree $k$, then
\[
P(\lambda x)=\lambda^k P(x), \qquad x\in X,\ \lambda\in\mathbb{C}.
\]
Throughout this paper, the degree of a homogeneous polynomial is indicated by a subscript.
That is, if $P_k$ is a homogeneous polynomial, then its degree is $k$.
Moreover, if $P_k$ is a $k$-homogeneous polynomial from $X$ into $Y$, then there exists a unique
symmetric $k$-linear mapping $u$ such that
\[
P_k(x)=u(x,\ldots,x), \qquad x\in X.
\]

Let $\mathcal{D}\subset X$ be a domain and let $F\colon \mathcal{D}\to Y$ be a holomorphic mapping.
For $z\in \mathcal{D}$, denote by $D^kF(z)$ the $k$-th Fr\'{e}chet derivative of $F$ at $z$.
If $\mathcal{D}$ contains the origin, then $F$ admits the expansion
\begin{equation}
F(z)=\sum_{k=0}^{\infty}\frac{1}{k!}\,D^kF(0)(z^k)
\label{eq:Taylor}
\end{equation}
in a neighbourhood of the origin.
Since $\frac{1}{k!}D^kF(0)(z^k)$ is a homogeneous polynomial of degree $k$, we shall use the notation
\[
P_k(z):=\frac{1}{k!}D^kF(0)(z^k)
\]
throughout this paper.
If $\mathcal{D}$ is a bounded balanced domain in $X$ and $F(\mathcal{D})$ is bounded, then the series
\eqref{eq:Taylor} converges uniformly on $r\mathcal{D}$ for each $r\in(0,1)$.

Let $F\colon \mathcal{D}\to Y$ be holomorphic. For $k\in\mathbb{N}$, we say that $z=0$ is a zero of order $k$ of $F$
if
\[
F(0)=0,\quad DF(0)=0,\ \ldots,\ D^{k-1}F(0)=0,\quad \text{but } D^kF(0)\neq 0.
\]

Let $B_X$ and $B_Y$ denote the open unit balls of the Banach spaces $X$ and $Y$, respectively.
A holomorphic mapping $\mu\colon B_X\to B_Y$ with $\mu(0)=0$ is called a \emph{Schwarz mapping}.
If $\mu_k$ is a Schwarz mapping such that $z=0$ is a zero of order $k$ of $\mu$, then the following estimate holds
(see, for example, \cite[Lemma~6.1.28]{GK2003}):
\begin{equation}
\|\mu_k(z)\|_Y \le \|z\|_X^{\,k}, \qquad z\in B_X.
\label{eq:Schwarz}
\end{equation}

Let $L(X,\mathbb{C})$ denote the space of continuous linear operators from $X$ into $\mathbb{C}$.
For each $x\in X\setminus\{0\}$, define
\[
T(x):=\{\,\ell_x\in L(X,\mathbb{C}) : \ell_x(x)=\|x\|,\ \|\ell_x\|=1\,\}.
\]
By the Hahn--Banach theorem, the set $T(x)$ is nonempty.\par
Motivated by the above results, we naturally pose the following problems.

\begin{Que1}
Can improved versions of Theorems 1.A and 1.C be established by involving two Schwarz functions?
\end{Que1}

\begin{Que2}
Can multidimensional analogues of Theorems 1.A and 1.C be obtained by involving two Schwarz mappings?
\end{Que2}
The main objective of this article is to provide affirmative answers to the above two problems. 
The paper is organized as follows. \par

In Section 2, we state our main theorems and present several remarks; in particular, 
Theorems~2.1--2.2 essentially provide affirmative answers to Question ~1.1 and~1.2. 
In Section~3, we establish a key lemma required for the proofs of our main results and 
also generalize the result of Jia \textit{et al.} \cite{JLP}. 
In Section~4, we further generalize Theorem 1.A to the setting of JB$^{*}$-triples, 
which again yields affirmative answers to Question 1.1 and 1.2.

\section{{\bf Refined Bohr Inequalities for Holomorphic Mappings $F : B_X \to \mathbb{U}$  in Banach Spaces.}}
In order to obtain multidimensional analogues of Theorem 1.C, we focus on
Bohr-type inequalities for holomorphic mappings defined on the unit ball $B_X$
of a complex Banach space $X$. Throughout this discussion, $X$ is assumed to be
a complex Banach space, and $\mu_n : B_X \to B_X$ denotes a Schwarz mapping with
a zero of order $n$ at the origin, where $n \in \mathbb{N}$.

We begin by presenting the multidimensional counterpart of Theorem 1.C. For
brevity, let $\text{Hol}(B_X,\mathbb{U})$ represent the collection of all holomorphic
functions $f$ from $B_X$ into $\mathbb{U}$ that admit a homogeneous polynomial expansion of the form
\begin{equation}\label{m0}
F(z)=\sum_{j=0}^{\infty} P_j(z), \qquad z \in B_X,
\end{equation}
and define $a := F(0).$

\begin{theo}\label{T0.1}
Let $F\in \text{Hol}(B_X,\mathbb{U})$ with the form (\ref{m0}).  Then for $m,k\in\mathbb{N}$, we have
\begin{equation}\label{1.1}
|F(\mu_m(z))| + |DF(\mu_m(z))(\mu_m(z))|+\sum_{s=2}^{\infty}|P_{s}(\mu_k(z))|+ \left(\frac{1}{1+|a|}+\frac{r^{k}}{1-r^{k}}\right)\sum_{s=1}^{\infty}|P_{s}(\mu_k(z))|^2\le 1.
\end{equation}
for $\|z\|=r\le R_{1,m,k}< R_{1,m}$, where $\mu_k,\mu_m:B_{X}\to B_{X}$ are Schwarz mappings having $z=0$ as a zero of order $k,m$ respectively, $R_{1,m}$ is a unique root of $r^{2m}+2r^m-1=0$ and $R_{1,m,k}$ is the unique root in $(0,1)$ of the equation
\begin{equation}\label{1.2}-\frac{1-r^m}{1+r^m}+\frac{2r^m}{(1+r^m)^2}+\frac{2r^{2k}}{1-r^k}=0.\end{equation}
The constant $R_{1,m,k}$ cannot be improved.
\end{theo}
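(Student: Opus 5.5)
Fix $z\in B_X$ with $\|z\|=r$, $z\neq 0$. Put $w=z/\|z\|$ and consider the one-variable slice $f(\zeta)=F(\zeta w)$, $\zeta\in\mathbb{U}$. Then $f\in\mathcal{B}$, its Taylor coefficients are $a_s=P_s(w)$, and $P_s(\zeta w)=a_s\zeta^s$. The Schwarz estimate \eqref{eq:Schwarz} gives $\|\mu_m(z)\|\le r^m$ and $\|\mu_k(z)\|\le r^k$. So $|P_s(\mu_k(z))|\le \sup_{\|x\|=1}|P_s(x)|\,\rho^s$ with $\rho=\|\mu_k(z)\|\le r^k$.

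I will work slice by slice: evaluate each homogeneous term on the direction of $\mu_k(z)$ or $\mu_m(z)$, and use the coefficient bounds valid for every $f\in\mathcal{B}$.
\begin{itemize}
\item $|a_s|\le 1-|a|^2$ for $s\ge1$, where $a=F(0)$ is slice independent.
\item $\sum_{s\ge1}|a_s|^2\rho^{2s}$ is controlled by the refined lemma of Theorem 1.C, namely $\sum_{s\ge1}|a_s|\rho^s+\bigl(\frac{1}{1+|a|}+\frac{\rho}{1-\rho}\bigr)\sum_{s\ge1}|a_s|^2\rho^{2s}\le (1-|a|^2)\frac{\rho}{1-\rho}$.
\end{itemize}
Applying this on the slice through $\mu_k(z)/\|\mu_k(z)\|$ and using monotonicity in $\rho$, I get
\[
\sum_{s\ge 2}|P_s(\mu_k(z))|+\Bigl(\tfrac{1}{1+|a|}+\tfrac{r^k}{1-r^k}\Bigr)\sum_{s\ge1}|P_s(\mu_k(z))|^2\le (1-|a|^2)\frac{r^{2k}}{1-r^k}.
\]
To get exactly this form I will apply the refined lemma to $\sum_{s\ge1}$ and then subtract the $s=1$ term. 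Instead, one can use the version in \cite{LLP2021} with the linear term removed. The monotonicity in $\rho$ follows because both sides are increasing in $\rho$.

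For the first two terms, set $t=\|\mu_m(z)\|\le r^m$. On the slice through $\mu_m(z)$, Schwarz--Pick gives
\[
|F(\mu_m(z))|\le \frac{|a|+t}{1+|a|t},\qquad |DF(\mu_m(z))(\mu_m(z))|=t|f'(t')|\le t\,\frac{1-|f|^2}{1-t^2}.
\]
As in the proof of Theorem 1.C, I combine them through $g(x)=x+t\frac{1-x^2}{1-t^2}$, which is increasing in $x$ for $t\le R_{1,m}$. This monotonicity is exactly where $r^{2m}+2r^m-1=0$ enters, giving the requirement $R_{1,m,k}<R_{1,m}$. Hence the left side of \eqref{1.1} is at most
\[
\Phi(|a|,r)=\frac{|a|+r^m}{1+|a|r^m}+r^m\frac{1-\bigl(\frac{|a|+r^m}{1+|a|r^m}\bigr)^2}{1-r^{2m}}+(1-|a|^2)\frac{r^{2k}}{1-r^k}.
\]
This uses the identity $1-\bigl(\frac{|a|+t}{1+|a|t}\bigr)^2=\frac{(1-|a|^2)(1-t^2)}{(1+|a|t)^2}$, which simplifies the middle term to $t(1-|a|^2)/(1+|a|t)^2$.

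Next I write $\Phi-1=(1-|a|)\,\Psi(|a|,r)$. Dividing by $1-|a|$ gives
\[
\Psi=-\frac{1-r^m}{1+|a|r^m}+\frac{(1+|a|)r^m}{(1+|a|r^m)^2}+(1+|a|)\frac{r^{2k}}{1-r^k}.
\]
I then check that $\Psi$ is increasing in $|a|$, so its supremum is the limit $|a|\to1$. That limit is precisely the left side of \eqref{1.2}, which is increasing in $r$. So $\Psi\le 0$ for $r\le R_{1,m,k}$. Uniqueness of the root follows from monotonicity together with the values at the endpoints: negative at $0$ and $+\infty$ at $1$. The main obstacle is verifying monotonicity of $\Psi$ in $|a|$. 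If it fails, I would instead bound directly at $|a|\to1^-$ using a derivative computation in $|a|$.

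For sharpness, take $X$ general and fix a unit vector $w$ with $\ell_w\in T(w)$. I set $\mu_m(z)=\ell_w(z)^m w$, $\mu_k(z)=\ell_w(z)^k w$ (norm $\le\|z\|^m$, order $m$), and $F(z)=\frac{a-\ell_w(z)}{1-a\ell_w(z)}$ with $a\in(0,1)$, evaluated at $z=rw$. Then $P_s(\mu_k(z))=-(1-a^2)a^{s-1}r^{ks}$ and every inequality above becomes an equality. The left side of \eqref{1.1} equals $1+(1-a)\Psi_a(r)$ with $\Psi_a\to$ the left side of \eqref{1.2} as $a\to1^-$. For $r>R_{1,m,k}$ this limit is positive, so \eqref{1.1} fails for $a$ close to $1$. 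For the $F(\mu_m)$ term I will use the point $-\ell$ to match signs.
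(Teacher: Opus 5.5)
Your proposal is correct and follows the paper's proof essentially step for step (slice reduction, Lemma~\ref{L2} with $N=2$, Schwarz--Pick and Lemma~\ref{L1} combined through $x+\lambda(1-x^2)$, factoring out $1-|a|$, letting $|a|\to1^-$, and a M\"obius map composed with a support functional for sharpness), with two clarifications. First, the $N=2$ lemma must be applied at radius $r^k$ on the slice through $\mu_k(z)$ and then combined with $\|\mu_k(z)\|\le r^k$ term by term, because the coefficient $\frac{r^k}{1-r^k}$ exceeds $\frac{\rho}{1-\rho}$ (subtracting the $s=1$ term from the $N=1$ case is too weak, since $|a_1|$ may be smaller than $1-|a|^2$); second, it is cleanest to bound $\frac{t}{1-t^2}\le\frac{r^m}{1-r^{2m}}$ for $t=\|\mu_m(z)\|$ first, so that monotonicity of $x+\lambda(1-x^2)$ with $\lambda=\frac{r^m}{1-r^{2m}}$ requires exactly $r\le R_{1,m}$.

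The obstacle you flag disappears if you multiply $\Psi$ by $1+|a|r^m$: this gives the paper's $D_1$, whose derivative in $x=|a|$ is $\frac{(1-r^m)r^m}{(1+xr^m)^2}+\frac{r^{2k}(1+r^m+2xr^m)}{1-r^k}\ge0$; and the assertion $R_{1,m,k}<R_{1,m}$, which you use but do not verify, holds because the first two terms of \eqref{1.2} sum to $\frac{r^{2m}+2r^m-1}{(1+r^m)^2}$, which vanishes at $R_{1,m}$ while the last term is positive.
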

\begin{rem} In the special case where $B_X=\mathbb{U}$ and $\mu_k(z)=\mu_m(z)=z$, 
Theorem~\ref{T0.1} coincides with the first assertion of Theorem~1.C.
\end{rem}
\begin{table}[H]
\centering
\begin{minipage}[b]{0.45\textwidth}
\centering
\begin{tabular}{|c|c|c|}
\hline
\textbf{k} & \textbf{m} & \textbf{$R_{1,m,k}$}  \\ 
\hline
1 & 1 &  0.280800 \\ 
\hline
1 & 3 & 0.441100 \\ 
\hline
2 & 5 &  0.648400 \\ 
\hline
3 & 7 &  0.744000 \\ 
\hline
\end{tabular}
\caption{$R_{1,m,k}$ is the unique root of equation (\ref{1.2}) in $(0,1)$.}
\label{tab3.1}
\end{minipage}%
\hfill
\begin{minipage}[b]{0.55\textwidth}
\centering
\begin{figure}[H]
\centering
\includegraphics[width=\textwidth]{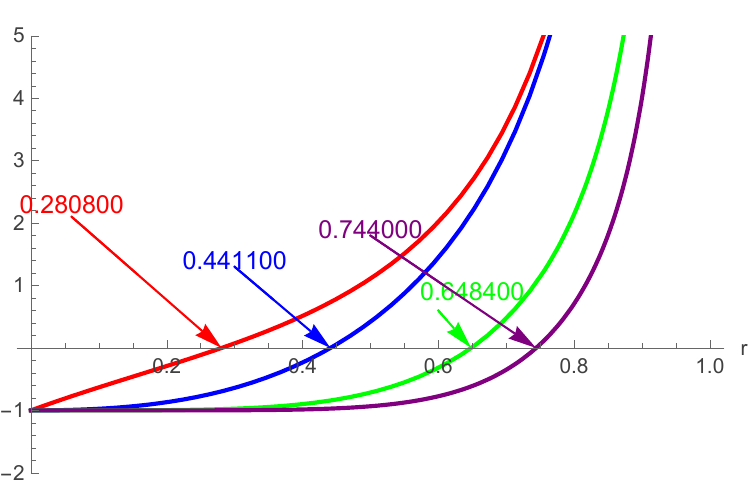}
\caption{The graphs exhibit the locations of the roots $R_{1,m,k}$ in $(0,1)$ for different values of $k,m$.}
\label{f31}
\end{figure}
\end{minipage}
\end{table}

\begin{theo}\label{T0.2}
Let $F\in \text{Hol}(B_X,\mathbb{U})$ with the form (\ref{m0}).  Then for $m,k\in\mathbb{N}$, we have
\begin{equation}\label{1.3}
|F(\mu_m(z))|^2 +|DF(\mu_m(z))(\mu_m(z))|+ \sum_{s=2}^{\infty}|P_{s}(\mu_k(z))|+ \left(\frac{1}{1+|a|}+\frac{r^{k}}{1-r^{k}}\right)\sum_{s=1}^{\infty}|P_{s}(\mu_k(z))|^2\le 1.
\end{equation}
for $\|z\|=r\le R_{2,m,k}< R_{2,m}$, where $\mu_k,\mu_m:B_{X}\to B_{X}$ are Schwarz mappings having $z=0$ as a zero of order $k,m$ respectively, $R_{2,m}$ is a unique root of $r^{2m}+r^m-1=0$ and $R_{2,m,k}$ is the unique root in $(0,1)$ of the equation
\begin{equation}\label{1.4}-\,\frac{1-r^{2m}-r^{m}}{(1+r^{m})^{2}}
+ \frac{r^{2k}}{1-r^{k}} = 0.\end{equation}
The constant $R_{2,m,k}$ cannot be improved.
\end{theo}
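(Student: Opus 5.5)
We reduce everything to one complex variable along complex lines (slices) through the origin and then reuse the one-variable estimates behind Theorem~1.C. Fix $z\in B_X$ with $\|z\|=r$ and put $w_m=\mu_m(z)$, $w_k=\mu_k(z)$. By (\ref{eq:Schwarz}), $\rho_m:=\|w_m\|\le r^m$ and $\rho_k:=\|w_k\|\le r^k$. For a unit vector $u\in X$ set $g_u(\lambda)=F(\lambda u)=\sum_{s\ge0}P_s(u)\lambda^s$, $\lambda\in\mathbb{U}$. Each $g_u$ is an analytic self-map of $\mathbb{U}$ with $g_u(0)=a$, so $|P_s(u)|\le 1-|a|^2$ for $s\ge1$. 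Taking $u=w_m/\rho_m$ gives $F(w_m)=g_u(\rho_m)$ and $DF(w_m)(w_m)=\rho_m g_u'(\rho_m)$. Taking $v=w_k/\rho_k$ gives $|P_s(w_k)|=|P_s(v)|\rho_k^s$. The fact that two different slices occur causes no difficulty, because the only link between them is the common value $a=F(0)$.

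\textbf{Step 1 (the tail terms).} Apply to $g_v$ the key one-variable lemma of Section~3 (the refinement from \cite{LLP2021}), in the form
\[
\sum_{s=2}^\infty|b_s|\rho^s+\Big(\frac1{1+|a|}+\frac{\rho}{1-\rho}\Big)\sum_{s=1}^\infty|b_s|^2\rho^{2s}\le (1-|a|^2)\frac{\rho^2}{1-\rho}.
\]
Combine it with the monotonicity of both sides in $\rho$ and $\rho_k\le r^k$. This bounds the last two sums in (\ref{1.3}) by $(1-|a|^2)r^{2k}/(1-r^k)$. Establishing exactly this ``$s\ge2$'' version of the lemma, including the weight $r^k/(1-r^k)$, is the step I expect to be the main obstacle. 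The approach I would try first is the Carlson-type identity $\sum_{s\ge1}|b_s|^2\rho^{2s}\le \ldots$ together with the standard refinement $|b_s|\le 1-|a|^2$ and Cauchy--Schwarz, as in \cite{LLP2021}.

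\textbf{Step 2 (the first two terms).} For $g=g_u$, Schwarz--Pick gives $|g'(\rho)|\le (1-|g(\rho)|^2)/(1-\rho^2)$ and $|g(\rho)|\le (|a|+\rho)/(1+|a|\rho)$. With $t=r^m\ge\rho_m$ and $x=|F(w_m)|$, the left side of (\ref{1.3}) is therefore at most
\[
x^2\Big(1-\frac{t}{1-t^2}\Big)+\frac{t}{1-t^2}+(1-|a|^2)\frac{r^{2k}}{1-r^k}.
\]
For $r<R_{2,m}$ we have $t+t^2<1$, so the coefficient of $x^2$ is positive and we may insert $x\le(|a|+t)/(1+|a|t)$. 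Using $(|a|+t)^2-(1+|a|t)^2=-(1-|a|^2)(1-t^2)$, the bound becomes
\[
1+(1-|a|^2)\Big[-\frac{1-t-t^2}{(1+|a|t)^2}+\frac{r^{2k}}{1-r^k}\Big].
\]
The bracket is increasing in $|a|$, so it is at most its value at $|a|=1$, which is the left side of (\ref{1.4}). That value is $\le0$ precisely for $r\le R_{2,m,k}$, since the left side of (\ref{1.4}) is increasing in $r$ and changes sign once in $(0,1)$. This proves (\ref{1.3}).

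\textbf{Step 3 (sharpness).} Fix a unit vector $u$ and $\ell\in T(u)$. Let
\[
\mu_m(z)=\ell(z)^{m-1}z,\qquad \mu_k(z)=\ell(z)^{k-1}z,\qquad F(z)=\frac{a-\ell(z)}{1-a\ell(z)},\quad a\in(0,1).
\]
Evaluate at $z=-ru$, replacing $\mu_m$ by $-\mu_m$ if needed for parity, so that $\mu_m(z)=-r^mu$ and $|P_s(\mu_k(z))|=(1-a^2)a^{s-1}r^{ks}$. The left side of (\ref{1.3}) then equals the one-variable expression for $\varphi_a$, which is computable in closed form. Letting $a\to1^-$ shows it exceeds $1$ for every $r>R_{2,m,k}$, because the bracket above tends to the left side of (\ref{1.4}), which is positive there. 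The remaining check is that the exact sums for $\varphi_a$ give the bracket up to $o(1-a)$ terms.
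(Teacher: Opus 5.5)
Your proposal is correct and follows essentially the same route as the paper. The paper also uses the slice functions $\zeta\mapsto F(\zeta z_0)$ and Lemma~\ref{L2} with $N=2$ (so $t=0$); that lemma is exactly the Step~1 inequality you flagged as the main obstacle, and it is already available as a cited result. The first two terms are handled by Schwarz--Pick with Lemma~\ref{L1}, the resulting bracket is increasing in $|a|$, and sharpness uses M\"{o}bius extremals composed with $\ell$ and $\mu_n(z)=\ell(z)^{n-1}z$.

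The one point to state carefully is the passage from $\rho_k=\|\mu_k(z)\|$ to $r^k$ in Step~1. Keep the weight fixed at $\frac{1}{1+|a|}+\frac{r^k}{1-r^k}$, use that both sums increase with the radius, and only then apply Lemma~\ref{L2} at radius $r^k$. Applying it at $\rho_k$ would only control the smaller weight $\frac{\rho_k}{1-\rho_k}$. Done this way, your argument is a tighter version of the paper's passage from (\ref{ne2}) to (\ref{te1.3}). You should also record that $R_{2,m,k}<R_{2,m}$: this is immediate because the left side of (\ref{1.4}) equals $\frac{r^{2k}}{1-r^k}>0$ at $r=R_{2,m}$.
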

\begin{rem} In the special case where $B_X=\mathbb{U}$ and $\mu_k(z)=\mu_m(z)=z$, 
Theorem~\ref{T0.2} coincides with the second assertion of Theorem~1.C.
\end{rem}
\begin{table}[H]
\centering
\begin{minipage}[b]{0.45\textwidth}
\centering
\begin{tabular}{|c|c|c|}
\hline
\textbf{k} & \textbf{m} & \textbf{$R_{2,m,k}$}  \\ 
\hline
1 & 1 & 0.385800 \\ 
\hline
1 & 3 & 0.535700 \\ 
\hline
2 & 5 &  0.717500 \\ 
\hline
3 & 7 &  0.797100 \\ 
\hline
\end{tabular}
\caption{$R_{2,m,k}$ is the unique root of equation (\ref{1.4}) in $(0,1)$.}
\label{tab3.2}
\end{minipage}%
\hfill
\begin{minipage}[b]{0.55\textwidth}
\centering
\begin{figure}[H]
\centering
\includegraphics[width=\textwidth]{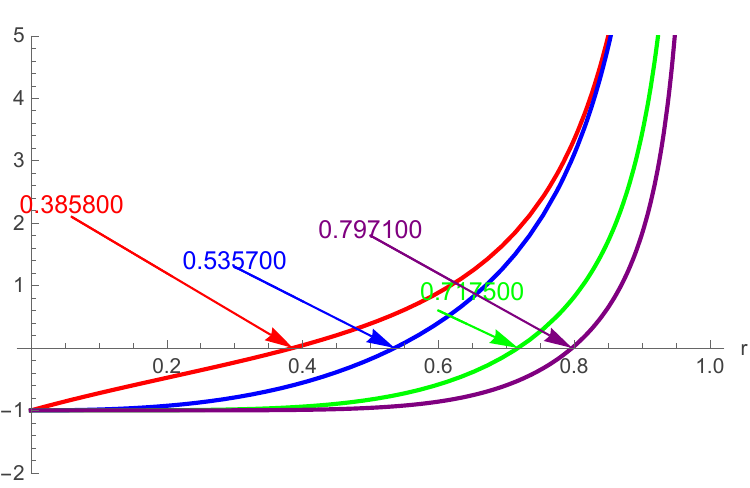}
\caption{The graphs exhibit the locations of the roots $R_{2,m,k}$ in $(0,1)$ for different values of $k,m$.}
\label{f32}
\end{figure}
\end{minipage}
\end{table}

The following are key lemmas of this paper and will be used to prove the main results.

\begin{lem}[Schwarz--Pick Lemma]\label{lem:Schwarz-Pick}
Let $\phi(z)=\sum_{n=0}^{\infty} a_n z^n$
be an analytic function in the unit disk $\mathbb{D}=\{z\in\mathbb{C}: |z|<1\},$
such that $|\phi(z)|<1$ for all $z\in\mathbb{D}$. Then the following assertions hold:

\begin{enumerate}
\item For any $z_1, z_2 \in \mathbb{D}$,
\[
\frac{|\phi(z_1)-\phi(z_2)|}{|1-\phi(z_1)\overline{\phi(z_2)}|}
\le
\frac{|z_1-z_2|}{|1-z_1\overline{z_2}|}.
\]
Moreover, equality holds for distinct $z_1, z_2 \in \mathbb{D}$ if and only if
$\phi$ is a M\"{o}bius transformation of $\mathbb{D}$.

\item For any $z\in\mathbb{D}$,
\[
|\phi'(z)| \le \frac{1-|\phi(z)|^2}{1-|z|^2}.
\]
Furthermore, equality holds for some $z\in\mathbb{D}$ if and only if
$\phi$ is a M\"{o}bius transformation.
\end{enumerate}
\end{lem}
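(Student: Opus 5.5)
The Schwarz--Pick lemma is classical, and I would prove both parts by composing with disc automorphisms and applying the Schwarz lemma. For $w\in\mathbb{D}$ put $\varphi_w(\zeta)=\dfrac{w-\zeta}{1-\overline{w}\zeta}$. This is an involutive automorphism of $\mathbb{D}$ with $\varphi_w(0)=w$, $\varphi_w(w)=0$, and its derivative at $w$ is $\varphi_w'(w)=-1/(1-|w|^2)$. I take the Schwarz lemma as known: if $g:\mathbb{D}\to\mathbb{D}$ is holomorphic with $g(0)=0$, then $|g(\zeta)|\le|\zeta|$ and $|g'(0)|\le1$, and equality in either (for some $\zeta\neq0$ in the first case) forces $g(\zeta)=e^{i\theta}\zeta$. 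Its proof is the maximum principle applied to $g(\zeta)/\zeta$.

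For part (1), fix $z_2$ and set $w=\phi(z_2)$. Consider $g=\varphi_w\circ\phi\circ\varphi_{z_2}$. It maps $\mathbb{D}$ into $\mathbb{D}$, since $|\phi|<1$ and $\varphi_w$ preserves the disc, and $g(0)=\varphi_w(\phi(z_2))=0$. Apply the Schwarz lemma at $\zeta=\varphi_{z_2}(z_1)$. Because $\varphi_{z_2}$ is an involution, $\varphi_{z_2}(\zeta)=z_1$, so
\[
|\varphi_w(\phi(z_1))|\le|\varphi_{z_2}(z_1)|.
\]
Written out, this is exactly the stated inequality
\[
\frac{|\phi(z_1)-\phi(z_2)|}{|1-\phi(z_1)\overline{\phi(z_2)}|}\le\frac{|z_1-z_2|}{|1-z_1\overline{z_2}|}.
\]
If $z_1\neq z_2$, then $\zeta\neq0$. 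Equality then forces $g(\zeta)=e^{i\theta}\zeta$, hence $\phi=\varphi_w\circ(e^{i\theta}\,\cdot)\circ\varphi_{z_2}$, which is a M\"obius automorphism. Conversely, every automorphism gives equality, because automorphisms preserve the pseudo-hyperbolic distance; this follows by applying the inequality to $\phi$ and to $\phi^{-1}$.

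For part (2), I would use the same $g$ with $z_2=z$ and differentiate at $0$ by the chain rule:
\[
g'(0)=\varphi_w'(\phi(z))\,\phi'(z)\,\varphi_z'(0).
\]
Here $\varphi_w'(w)=-1/(1-|w|^2)$ and $\varphi_z'(0)=-(1-|z|^2)$. Hence $|g'(0)|\le1$ becomes $|\phi'(z)|(1-|z|^2)\le1-|\phi(z)|^2$, which is the claimed bound. Equality forces $g$ to be a rotation, so $\phi$ is an automorphism. Conversely, a direct computation for $\phi=e^{i\theta}\varphi_a$ gives equality at every point.

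The only place needing care is the equality statements, in particular the use of the involution property and the case $z_1=z_2$, which is excluded in part (1). The main inequalities are immediate once $g$ is set up. In the paper this lemma is a tool. Part (2) gives
\[
|DF(\mu_m(z))(\mu_m(z))|\le\frac{(1-|F(\mu_m(z))|^2)\,r^m}{1-r^{2m}},
\]
after restricting $F$ to a complex line and applying \eqref{eq:Schwarz}.
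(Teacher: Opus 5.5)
Your proof is correct. It is the standard proof of the Schwarz--Pick lemma: you conjugate $\phi$ by the involutions $\varphi_{z_2}$ and $\varphi_{\phi(z_2)}$, apply the Schwarz lemma to the resulting map fixing $0$, and handle both equality clauses properly.

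The paper gives no proof of its own, so there is nothing to compare against. It quotes the lemma as classical and uses only part (2), to obtain \eqref{te1.4}. The consequence you record at the end, with $1-|F(\mu_m(z))|^2$ in the numerator, is the correct form of \eqref{te1.5}.
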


\begin{lem}\label{L1} \cite{CHPV}
Suppose that $B_{X}$ and $B_{Y}$ are the unit balls of the complex Banach spaces $X$ and $Y$, respectively. Let $F: B_{X} \to B_{Y}$ be a holomorphic mapping. Then
\begin{equation*}
\|F(z)\|_{Y} \le \frac{\|F(0)\|_{Y} + \|z\|_{X}}{1 + \|F(0)\|_{Y}\|z\|_{X}}, \qquad z \in B_{X}.
\end{equation*}
This estimate is sharp with equality possible for each value of $\|f(0)\|_{Y}$ and for each $z \in B_{X}$.
\end{lem}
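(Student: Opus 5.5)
The plan is to reduce to a one-variable statement on the unit disk by composing with a norming functional, and then apply the Schwarz--Pick Lemma~\ref{lem:Schwarz-Pick}. Fix $z\in B_X$. If $z=0$ the inequality is trivial, and if $F(z)=0$ it is also trivial. So assume $z\neq 0$ and $F(z)\neq 0$, and set $r=\|z\|_X\in(0,1)$ and $w=z/r$, so that $\|w\|_X=1$. By the Hahn--Banach theorem choose $\ell\in T(F(z))$, a functional on $Y$ with $\ell(F(z))=\|F(z)\|_Y$ and $\|\ell\|=1$. Define
\[
g(\lambda):=\ell\bigl(F(\lambda w)\bigr),\qquad \lambda\in\mathbb{U}.
\]
Since $\|\lambda w\|_X=|\lambda|<1$, the function $g$ is holomorphic on $\mathbb{U}$. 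Moreover $|g(\lambda)|\le \|F(\lambda w)\|_Y<1$, so $g$ is a self-map of the disk. We also have $g(r)=\|F(z)\|_Y$ and $|g(0)|=|\ell(F(0))|\le \|F(0)\|_Y$.

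Next I would prove the one-variable estimate $|g(\lambda)|\le \dfrac{|g(0)|+|\lambda|}{1+|g(0)||\lambda|}$. Put $c=g(0)$. Part~(1) of Lemma~\ref{lem:Schwarz-Pick} with $z_1=\lambda$ and $z_2=0$ gives
\[
\left|\frac{g(\lambda)-c}{1-\bar c\, g(\lambda)}\right|\le |\lambda|.
\]
Hence $g(\lambda)=\varphi_{-c}(\zeta)=\dfrac{\zeta+c}{1+\bar c\zeta}$ for some $\zeta$ with $|\zeta|\le|\lambda|$. The elementary bound $|\zeta+c|\,(1+|c||\zeta|)\le (|\zeta|+|c|)\,|1+\bar c\zeta|$ is equivalent, after squaring, to $(1-|c|^2)(1-|\zeta|^2)\ge 0$ type identities. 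From it we get $|g(\lambda)|\le \dfrac{|c|+|\zeta|}{1+|c||\zeta|}$, and the right side is increasing in $|\zeta|$, so it is at most $\dfrac{|c|+|\lambda|}{1+|c||\lambda|}$. This computation is the only step requiring care, and I expect it to be routine.

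Evaluating at $\lambda=r$ gives $\|F(z)\|_Y\le \dfrac{|c|+r}{1+|c|r}$. The map $t\mapsto \dfrac{t+r}{1+tr}$ is increasing on $[0,1)$, since its derivative is $(1-r^2)/(1+tr)^2>0$. Because $|c|\le\|F(0)\|_Y$, the asserted inequality follows.

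For sharpness, fix $a\in[0,1)$, a point $z_0\in B_X\setminus\{0\}$, a functional $\ell_0\in T(z_0)$ and a unit vector $y_0\in Y$. Define
\[
F(z)=\frac{\ell_0(z)+a}{1+a\,\ell_0(z)}\;y_0 .
\]
Since $|\ell_0(z)|\le\|z\|_X<1$ and the Möbius map preserves $\mathbb{U}$, $F$ is a holomorphic map $B_X\to B_Y$. It satisfies $\|F(0)\|_Y=a$, and
\[
\|F(z_0)\|_Y=\frac{\|z_0\|_X+a}{1+a\|z_0\|_X},
\]
so equality is attained for each value of $\|F(0)\|_Y$ and at each point of $B_X$.
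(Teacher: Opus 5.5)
Your proof is correct. The pieces all work as stated: the slice reduction $g(\lambda)=\ell\bigl(F(\lambda z/\|z\|_X)\bigr)$ with $\ell\in T(F(z))$, the one-variable bound from Schwarz--Pick, monotonicity in $|g(0)|\le\|F(0)\|_Y$, and the extremal map $\frac{\ell_0(z)+a}{1+a\ell_0(z)}\,y_0$. Your squared "elementary bound" reduces exactly to $2(1-|c|^2)(1-|\zeta|^2)\bigl(|c||\zeta|-\operatorname{Re}(\bar c\zeta)\bigr)\ge 0$. The paper gives no proof of its own and simply cites \cite{CHPV}, where the lemma is proved by essentially this same argument.
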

\begin{lem}\label{L2}\cite{LLP2021}
Suppose that $f$ is an analytic self-maps of the unit disk $\mathbb{U}$ defined by
\[
f(z)=\sum_{n=0}^{\infty} a_n z^n .
\]
Then, for any $N \in \mathbb{N}$, the following inequality holds:
\[
\sum_{n=N}^{\infty} |a_n|\, r^n
+ \operatorname{sgn}(t)\sum_{n=1}^{t} |a_n|^2\,\frac{r^{N}}{1-r}
+ \left(\frac{1}{1+|a_0|}+\frac{r}{1-r}\right)
\sum_{n=t+1}^{\infty} |a_n|^2 r^{2n}
\leq (1-|a_0|^2)\,\frac{r^{N}}{1-r},
\]
for $r \in [0,1)$, where $t=\left\lfloor \frac{N-1}{2} \right\rfloor .$
\end{lem}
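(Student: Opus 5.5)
Lemma~\ref{L2} is a purely one-variable statement, so the plan is to prove it directly on $\mathbb{U}$. Write $a=|a_0|$ and $r\in[0,1)$. The argument rests on two classical coefficient facts for a self-map $f$ of $\mathbb{U}$.

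\textbf{Step 1: two coefficient bounds.} The first is Parseval's inequality $\sum_{n\ge 1}|a_n|^2\le 1-a^2$. The second is a refined Wiener-type bound for each single coefficient, which I expect to take the form
\begin{equation*}
|a_n|\le 1-a^2-\frac{|a_n|^2}{1+a}, \qquad n\ge 1 .
\end{equation*}
To get it, I would first reduce to $n=1$. Averaging over the $n$-th roots of unity, $g(z)=\frac1n\sum_{j=0}^{n-1} f(\omega^j z)=a_0+a_nz^n+a_{2n}z^{2n}+\cdots$ is again a self-map of $\mathbb{U}$. Hence $h(w)=a_0+a_nw+\cdots$ is a self-map, and $a_n$ is its first Taylor coefficient. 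For $n=1$, Lemma~\ref{lem:Schwarz-Pick} gives $|a_1|\le 1-a^2$. The sharper form follows by applying the Schur step to $h$: the map $\psi=(h-a_0)/(1-\overline{a_0}h)$ satisfies $\psi(w)=w\psi_1(w)$ with $|\psi_1|\le1$, and comparing coefficients gives $|a_n|=(1-a^2)|\psi_1(0)|$. The displayed bound is then equivalent to $|\psi_1(0)|\le 1-(1-a)|\psi_1(0)|^2$, which holds after using $|\psi_1(0)|\le1$.

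\textbf{Step 2: splitting the coefficients.} Next I would split the indices according to $t=\lfloor (N-1)/2\rfloor$. For $n\ge N$, bound each $|a_n|r^n$ by the refined single-coefficient inequality from Step 1, weighted by $r^n$. For the low indices $1\le n\le t$, note that $2n\le N-1<N$. This is why these terms can be given the large weight $r^N/(1-r)$: I plan to use Parseval in the form $\sum_{n\le t}|a_n|^2+\sum_{n>t}|a_n|^2\le 1-a^2$. This lets me trade part of the budget $(1-a^2)\frac{r^N}{1-r}=(1-a^2)\sum_{j\ge N}r^j$ against the square terms. For the indices $n>t$, the weight $\big(\frac1{1+a}+\frac r{1-r}\big)r^{2n}$ should be absorbed as follows. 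The $\frac{1}{1+a}$ part comes from the refined coefficient bound, and the $\frac r{1-r}r^{2n}=\sum_{j>2n}r^j$ part comes from redistributing Parseval across the tail $j\ge N$. This uses $2n\ge N-1$, i.e.\ $2n+1\ge N$ exactly when $n>t$.

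\textbf{Step 3: the bookkeeping.} Concretely, I would write $\frac{r^N}{1-r}=\sum_{j\ge N}r^j$ and, for each $j\ge N$, show that the $j$-th piece of the right-hand side dominates $|a_j|r^j$ plus the share of the square terms assigned to $r^j$. One subtlety is that the refined bound only supplies $\frac{|a_j|^2}{1+a}$ at the same index $j$, whereas the lemma carries $\frac{1}{1+a}|a_n|^2r^{2n}$ for every $n>t$, including $t<n<N$. Since $\frac{1}{1+a}\le 1$ and $2n\ge N$ for such $n$, I expect the $\frac{1}{1+a}$ part of those terms to be paid for from Parseval on the tail as well. Combining the per-$j$ inequalities and summing the geometric series then gives the claim. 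When $t=0$ the $\operatorname{sgn}(t)$ term vanishes, and the statement reduces to the known refined inequality for $N=1,2$.

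\textbf{Main obstacle.} The hard part is making the redistribution exact: the leftover Parseval mass must cover both the $\frac{r}{1-r}$-weighted square terms and the $\frac{r^N}{1-r}$-weighted low-index terms simultaneously. In particular one must make sure that no index $n$ is charged twice. If a direct per-$j$ charging fails, the first fallback I would try is to apply the already known $N=1$ case to the function $h$ obtained from the averaging construction. The general $N$ would then follow by multiplying through by $r^{N-1}$ and comparing weights index by index, using $r^{2n}\le r^{N}$ for $2n\ge N$.
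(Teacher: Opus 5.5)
There is a genuine gap, and it starts in Step 1: the bound $|a_n|\le 1-a^2-\frac{|a_n|^2}{1+a}$ is false. For $f(z)=z$ and $n=1$ it reads $1\le 0$. Your reduction to $c\le 1-(1-a)c^2$ with $c=|\psi_1(0)|$ is correct algebra, but $c\le 1$ does not imply it (take $c=1$, $a<1$). What the next Schur step on your averaged function $h(w)=a_0+a_nw+a_{2n}w^2+\cdots$ actually gives, via $|\psi_1'(0)|\le 1-|\psi_1(0)|^2$, is a bound on the \emph{next} coefficient:
\[
|a_{2n}|\le 1-|a_0|^2-\frac{|a_n|^2}{1+|a_0|}.
\]
This also settles the ``subtlety'' you raised. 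The term $\frac{1}{1+|a_0|}|a_n|^2r^{2n}$ must be charged to the index $2n$, whose weight is exactly $r^{2n}$, not to the index $n$.

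The more serious problem is that Parseval plus bounds on individual coefficients cannot give the lemma, however you redistribute. Divide the inequality by $r^N$ and let $r\to0^+$:
\[
|a_N|+\sum_{n=1}^{t}|a_n|^2+\varepsilon_N\,\frac{|a_{N/2}|^2}{1+|a_0|}\le 1-|a_0|^2\qquad(\varepsilon_N=1 \text{ if } N \text{ is even},\ \varepsilon_N=0 \text{ otherwise}).
\]
This couples $|a_N|$ to the squares of the lower coefficients. For example, take $a_0=0$, $|a_1|=|a_2|=\tfrac12$, $|a_5|=\tfrac35$, and all other coefficients $0$. These satisfy Parseval, $|a_n|\le 1$, and even your Step 1 bound. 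Yet $|a_5|+|a_1|^2+|a_2|^2=\tfrac{11}{10}>1$, so the lemma with $N=5$ fails for them at small $r$.

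The missing ingredient is Carlson's classical inequalities
\[
|a_{2n+1}|\le 1-\sum_{k=0}^{n}|a_k|^2\ (n\ge 0),\qquad |a_{2n}|\le 1-\sum_{k=0}^{n-1}|a_k|^2-\frac{|a_n|^2}{1+|a_0|}\ (n\ge 1).
\]
Root-of-unity averaging cannot produce these, since it discards the intermediate coefficients. Your fallback fails too. The $N=1$ case applied to $h$ only sees $a_N,a_{2N},\dots$, and with weight $\frac{r^N}{1-r^N}$. Multiplying the $N=1$ inequality by $r^{N-1}$ produces weights $r^{n+N-1}\le r^n$, which goes the wrong way.

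With Carlson's inequalities in hand, your Step 3 per-index charging works as you wrote it. Multiply the bound for $|a_m|$ by $r^m$, sum over $m\ge N$, and interchange. The $1-|a_0|^2$ parts sum to $(1-|a_0|^2)\frac{r^N}{1-r}$. The total weight of $|a_j|^2$ for $j\ge 1$ is $\sum_{m\ge\max(N,2j+1)}r^m$, plus $\frac{r^{2j}}{1+|a_0|}$ when $2j\ge N$. Since $2t+1\le N\le 2t+2$, this weight equals $\frac{r^N}{1-r}$ for $j\le t$, and $\bigl(\frac{1}{1+|a_0|}+\frac{r}{1-r}\bigr)r^{2j}$ for $j\ge t+1$. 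That is exactly the lemma, and no index is charged twice. The paper itself gives no proof; it quotes the lemma from \cite{LLP2021}.
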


\section*{{\bf Proof of the Main Results}}
\begin{proof}[{\bf Proof of Theorem \ref{T0.1}}]  Let $z\in B_X\setminus\{0\}$ be fixed and define $z_0=\frac{z}{\|z\|_X}\in \partial B_X.$ Also let $f(\zeta)=F(\zeta z_{0})$. Then $f$ is a holomorphic functions in $\mathbb{U}$ and
\bea\label{te1.0}
f(\zeta)=a+\sum_{s=1}^{\infty}(P_{s})(z_{0})\zeta^{s},\qquad \zeta\in\mathbb{U}.
\eea
Therefore,  we have (see e.g. \cite[p.\,35]{GK2003}), $|P_{s}(z_{0})|\leq 1-|a|^2$ for all $s\geq 1$. Now applying Lemma \ref{L2}, we deduce that 

\bea\label{te1.1} &&\sum_{s=2}^{\infty}|P_{s}(z_{0})|\,|\zeta|^{s}
+\left(\frac{1}{1+|a|}+\frac{|\zeta|}{1-|\zeta|}\right)
\sum_{s=1}^{\infty} |P_{s}(z_{0})|^2 |\zeta|^{2s}\nonumber\\
& \le & (1-|a|^2)\frac{|\zeta|^2}{1-|\zeta|}.\eea
Let $\zeta=\|z\|_X=r<1$. Then by Eq. (\ref{te1.1}), we deduce that 
\bea\label{te1.2}\sum_{s=2}^{\infty}|P_{s}(z)|+\left(\frac{1}{1+|a|}+\frac{\|z\|_X}{1-\|z\|_X}\right)
\sum_{s=1}^{\infty} |P_{s}(z)|^2 \leq (1-|a|^2)\frac{\|z\|_X^2}{1-\|z\|_X}.\eea

Note that $t(x)=\frac{x}{1-x}$. Then $t'(x)=\frac{1}{(1-x)^2}>0,$ so $t(x)$ is increasing on $(0,1)$, since $r^k\leq r$ for $k\geq 1$, it follows that 
\[t(r^k)\leq t(r).\]
Hence 
\[\frac{r^k}{1-r^k}\leq \frac{r}{1-r} =\frac{\|z\|_X}{1-\|z\|_X}.\]
That is 
\bea\label{ne1}\frac{1}{1+|a|}+\frac{r^k}{1-r^k}\leq \frac{1}{1+|a|}+\frac{\|z\|_X}{1-\|z\|_X}\eea
Therefore from (\ref{te1.2}) and (\ref{ne1}), we get 
\beas &&\sum_{s=2}^{\infty}|P_{s}(z)|+\left(\frac{1}{1+|a|}+\frac{r^k}{1-r^k}\right)
\sum_{s=1}^{\infty} |P_{s}(z)|^2\\
&\leq &\sum_{s=2}^{\infty}|P_{s}(z)|+\left(\frac{1}{1+|a|}+\frac{\|z\|_X}{1-\|z\|_X}\right)
\sum_{s=1}^{\infty} |P_{s}(z)|^2 \leq (1-|a|^2)\frac{\|z\|_X^2}{1-\|z\|_X}\eeas
and so 
\bea\label{ne2} \sum_{s=2}^{\infty}|P_{s}(z)|+\left(\frac{1}{1+|a|}+\frac{r^k}{1-r^k}\right)
\sum_{s=1}^{\infty} |P_{s}(z)|^2\leq (1-|a|^2)\frac{\|z\|_X^2}{1-\|z\|_X}.\eea

In view of (\ref{ne2}) and (\ref{eq:Schwarz}), we obtain
\bea\label{te1.3}&&\sum_{s=2}^{\infty}|P_{s}(\mu_k(z))|+\left(\frac{1}{1+|a|}+\frac{r^k}{1-r^k}\right)
\sum_{s=1}^{\infty} |P_{s}(\mu_k(z))|^2\nonumber\\
 &\leq & (1-|a|^2)\frac{\|\mu_k(z)\|_X^2}{1-\|\mu_k(z)\|_X}\;\left(\text{as $\frac{x}{1-x}$ is increasing in $(0,1)$}\right)\nonumber\\
&\leq &(1-|a|^2)\frac{\|z\|_X^{2k}}{1-\|z\|_X^k}.\eea
Differentiating (\ref{te1.0}), we have
\[f'(\zeta)=D F(\zeta z_{0}) z_0.\]
Then by Schwarz-Pick Lemma, we can conclude that 
\[ |D F(\zeta z_{0}) z_0|=|f'(\zeta)|\leq \frac{1-|f(\zeta)|^2}{1-|\zeta|^2}.\]
Let $\zeta=\|z\|_X=r<1$. Then the above Eq. becomes
\bea\label{te1.4} |D F(z) z|\leq \left(\frac{1-|F(z)|^2}{1-\|z\|^2}\right) \|z\|_X.\eea
By combining \eqref{te1.4} and \eqref{eq:Schwarz}, we obtain

\bea\label{te1.5} |D F(\mu_m(z)) \mu_m(z)|\leq \left(\frac{1-|F(\mu_m(z))|}{1-\|\mu_m(z)\|^2}\right) \|\mu_m(z)\|_X\leq \left(\frac{1-|F(\mu_m(z))|}{1-\|z\|_X^{2m}}\right) \|z\|_X^m.\eea
Let $x=|a|\in[0,1]$. If $x=1$, then $P_s(z_0)=0$ for all $s\ge1$, and
\eqref{1.1} holds trivially. Hence, we assume $x\in[0,1)$.  Then from \eqref{te1.4} and \eqref{te1.5}, we obtain
\bea\label{te1.6} &&|F(\mu_m(z))|+|D F(\mu_m(z)) \mu_m(z)|+\sum_{s=2}^{\infty}|P_{s}(\mu_k(z))|+\left(\frac{1}{1+|a|}+\frac{r^k}{1-r^k}\right)
\sum_{s=1}^{\infty} |P_{s}(\mu_k(z))|^2\nonumber\\
&&\leq |F(\mu_m(z))|+\left(\frac{1-|F(\mu_m(z))|^2}{1-r^{2m}}\right) r^m+(1-x^2)\frac{r^{2k}}{1-r^k}\nonumber\\
&&\leq  \frac{r^{m}+x}{1+r^{m}x}+\left(1-\left(\frac{r^{m}+x}{1+r^{m}x}\right)^2\right)\frac{r^m}{1-r^{2m}}+(1-x^2)\frac{r^{2k}}{1-r^k}\nonumber\\
&&=1+\frac{(1-r^m)(x-1)}{1+xr^m}+\frac{(1-x^2)(1-r^{2m})}{(1+xr^m)^2}\frac{r^m}{1-r^{2m}}+(1-x^2)\frac{r^{2k}}{1-r^k}\nonumber\\
&&=1+\frac{(1-x)D_1(x,r)}{1+xr^m},\eea
where 
\[D_1(x,r)=-(1-r^m)+\frac{(1+x)r^m}{1+xr^m}+\frac{r^{2k}(1+x)(1+xr^m)}{1-r^k}.\]
It is worth pointing out that, in the third inequality of Eq. (\ref{te1.6}), we have used the fact that
\begin{equation}\label{ll1}
\Phi(t)=t+\lambda(1-t^{2})\leq \Phi(t_{0}),
\end{equation}
whenever
\begin{equation*}\label{ll2}
t=|F(\mu_m(z))|\leq t_{0}=\frac{r^{m}+x}{1+r^{m}x}
\quad \text{and} \quad
\lambda=\frac{r^{m}}{1-r^{2m}}.
\end{equation*}
Inequality \eqref{ll1} holds provided that $\frac{2r^{m}}{1-r^{2m}}\leq 1,$ which is satisfied for $0\leq r\leq R_m$, where $R_m$ is the unique positive root of the equation $r^{2m}+2r^{m}-1=0.$\par
Differentiating partially $D_1(x,r)$ with respect to $x$, we deduce that 
\[\frac{\partial D_1(x,r)}{\partial x}=\frac{(1-r^m)r^m}{(1+xr^m)^2}+\frac{r^{2k}(1+r^m+2xr^m)}{1-r^k}\geq 0,\]
which shows that $D_1(x,r)$ is increasing with respect to $x$ and so we deduce that 
\beas D_1(x,r)\leq D_1(1,r)=-(1-r^m)+\frac{2r^m}{1+r^m}+\frac{2r^{2k}(1+r^m)}{1-r^k}\leq 0,\eeas
for $r\leq R_{1,m,k}$, where $R_{1,m,k}$ is minimal positive root of (\ref{1.2}). We can see that 
\beas
\frac{d}{dr}D_1(1,r)&=&mr^{m-1}+\frac{2mr^{m-1}}{(1+r^m)^2}+\frac{2(2kr^{2k-1}(1+r^m)+mr^{2k+m-1})}{1-r^k}\\
&&+\frac{2kr^{3k-1}(1+r^m)}{(1-r^k)^2}\geq 0.
\eeas
and so $D_1(1,r)$ is increasing in $(0,1)$. Note that $D_1(1,0)=-1$ and $\lim_{r\rightarrow 1^-}D_1(1,r)=+\infty$. Therefore $R_{1,m,k}$ is unique  root of (\ref{1.2}) in $(0,1)$. \par 
We now claim that \( R_{1,m,k} < R_{1,m} \).
Suppose, on the contrary that \( R_{1,m,k} \geq R_{1,m} \).
Then for any \( r \geq R_{1,m} \) we have $-1+2r^{m}+r^{2m}\geq 0,$
and hence
\begin{align*}
-(1-r^m)+\frac{2r^m}{1+r^m}+\frac{2r^{2k}(1+r^m)}{1-r^k}
&=
\frac{-1+2r^{m}+r^{2m}}{1+r^{m}}
+\frac{2r^{2k}(1+r^{m})}{1-r^{k}}  >0.
\end{align*}
This contradicts \( R_{1,m,k} \) is a root of (\ref{1.2}).
Therefore $R_{1,m,k} < R_{1,m}.$\par

\medskip

{\bf Sharpness of the Radius $R_{1,m,k}$:}
Next, we prove the sharpness for $R_{1,m,k}$. We consider the functions
 \bea\label{fa}
f_a(z)
= \frac{a + T_v(z)}{1 + a\,T_v(z)}
= a + (1-a^2)\sum_{s=1}^{\infty}(-a)^{\,s-1}\bigl(T_v(z)\bigr)^s,
\qquad z \in B_X,\ a \in (0,1)
\eea
and $\mu_n(z)=T_v(z)^{n-1}z$, where $T_v\in T(v)$ and $v\in \partial B_X$ for $n\geq 1$.
Clearly 
\[ Df_a(z)(z) =\frac{(1-a^2)\,T_v(z)}{\bigl(1+a\,T_v(z)\bigr)^2}.\]

For \(z = r v\), where \(v \in \partial B_X\), we have $T_v(z) = r$ and so $\mu_n(z) = r^n v,  n \in \mathbb{N}$ and $P_s(\mu_n(z))=(1-a^2)(-a)^{s-1}r^{sn}$ for $s\geq 1$.\par
Therefore, by a simple computation it follows that
\beas &&\mathcal{A}(f_a(z)):=|f_a(\mu_m(r v))|+|D f_a(\mu_m(r v)) \mu_m(r v)|+\sum_{s=2}^{\infty}|P_{s}(\mu_k(r v))|\\
&&+\left(\frac{1}{1+|a|}+\frac{r^k}{1-r^k}\right)
\sum_{s=1}^{\infty} |P_{s}(\mu_k(r v))|^2\\
&=&\frac{a+r^m}{1+ar^m}+\frac{(1-a^2)r^m}{\bigl(1+ar^m\bigr)^2}+(1-a^2)\sum_{s=2}^{\infty}a^{s-1}r^{sk}+\frac{(1-a^2)^2(1+|a|\,r^{k})}{(1+|a|)(1-r^{k})}\sum_{s=1}^{\infty}a^{2s-2}r^{2sk}\\
&=&1-\frac{(1-a)(1-r^m)}{1+ar^m}+\frac{(1-a^2)r^m}{\bigl(1+ar^m\bigr)^2}+(1-a^2) \frac{a\, r^{2k}}{1-a r^{k}}+\frac{(1-a^2)^2(1+|a|\,r^{k})}{(1+|a|)(1-r^{k})}\frac{r^{2k}}{1-a^{2} r^{2k}}\\
&=&1+(1-a) G_1(a,r),\eeas
where 
\[G_1(a,r)=-\frac{1-r^m}{1+ar^m}+\frac{(1+a)r^m}{\bigl(1+ar^m\bigr)^2}+(1+a)\frac{a\, r^{2k}}{1-a r^{k}}+\frac{(1-a^2)(1+a)(1+|a|\,r^{k})}{(1+|a|)(1-r^{k})}\frac{ r^{2k}}{1-a^{2} r^{2k}}.\]
Let $a\rightarrow 1$. It follows that 

\[\lim_{a\rightarrow 1}G_1(a,r)=\frac{1}{1+r^m}\left[-(1-r^m)+\frac{2r^m}{1+r^m}+\frac{2r^{2k}(1+r^m)}{1- r^{k}}\right]>0,\;\text{for}\;r>R_{1,m,k}.\]
Hence $\mathcal{A}(f_a(z))>1$ for $r>R_{1,m,k}$.
\end{proof}
\begin{proof}[{\bf Proof of the Theorem \ref{T0.2}}] Using similar argument as in the proof of Theorem \ref{T0.1}, we obtain (\ref{te1.3}) and
(\ref{te1.4}). Therefore, letting $x=|a|$ we have
\bea\label{te2.1} &&|F(\mu_m(z))|^2+|D F(\mu_m(z)) \mu_m(z)|+\sum_{s=2}^{\infty}|P_{s}(\mu_k(z))|+\left(\frac{1}{1+|a|}+\frac{r^k}{1-r^k}\right)
\sum_{s=1}^{\infty} |P_{s}(\mu_k(z))|^2\nonumber\\
&&\leq |F(\mu_m(z))|^2+\left(\frac{1-|F(\mu_m(z))|^2}{1-r^{2m}}\right) r^m+(1-x^2)\frac{r^{2k}}{1-r^k}\nonumber\\
&&\leq  \left(\frac{r^{m}+x}{1+r^{m}x}\right)^2\left(1-\frac{r^m}{1-r^{2m}}\right)+\frac{r^m}{1-r^{2m}}+(1-x^2)\frac{r^{2k}}{1-r^k}\nonumber\\
&&=1+\frac{1-r^m-r^{2m}}{1-r^{2m}}\left(\left(\frac{r^{m}+x}{1+r^{m}x}\right)^2-1\right)+(1-x^2)\frac{r^{2k}}{1-r^k}\nonumber\\
&&=1+(1-x^2)D_2(x,r).
\eea
The  third inequality of Eq. (\ref{te2.1}) holds for such $r \in [0,1]$ satisfying $1-\frac{r^{m}}{1-r^{2m}} \geq 0,$
that is for $r \in [0,R_{2,m}]$, where $R_{2,m} \in (0,1)$ is the unique root of the equation $1-r^{2m}-r^{m}=0$
and
\[
D_{2}(x,r)
= -\,\frac{1-r^{2m}-r^{m}}{(1+x r^{m})^{2}}
+ \frac{r^{2k}}{1-r^{k}}.
\]

It is evident that the function $D_{2}(x,r)$ is monotonically increasing with respect to $x$.
Therefore, we obtain
\[
D_{2}(x,r) \leq D_{2}(1,r)
= -\,\frac{1-r^{2m}-r^{m}}{(1+r^{m})^{2}}
+ \frac{r^{2k}}{1-r^{k}}
\leq 0
\]
for $r \leq R_{2,m,k}$, where $R_{2,m,k}$ is the smallest positive root of the equation
\begin{equation}\label{eq:3.5}
-\,\frac{1-r^{2m}-r^{m}}{(1+r^{m})^{2}}
+ \frac{r^{2k}}{1-r^{k}} = 0 .
\end{equation}

Differentiating $D_2(x,r)$ with respect to $r$, we get 
\[\frac{\partial D_2(1,r)}{\partial r}=\frac{m r^{m-1}\left(3+2r^m-r^{2m}\right)}
{(1+r^m)^3}+ \frac{k r^{2k-1}\bigl(2-r^{k}\bigr)}{(1-r^{k})^{2}} >0\]
for all $r\in (0,1)$ and so $D_2(1,r)$ is increasing in $(0,1)$. Note that $D_2(1,0)=-1$ and $\lim_{r\rightarrow 1^-}D_2(1,r)=+\infty$. Therefore $R_{2,m,k}$ is the unique root of (\ref{1.4}) in $(0,1)$.

We now claim that \( R_{2,m,k} < R_{2,m} \).
Suppose, on the contrary that \( R_{2,m,k} \geq R_{2,m} \).
Then for any \( r \geq R_{2,m} \) we have $-1+r^{m}+r^{2m}\geq 0,$
and hence
\begin{align*}
-\,\frac{1-r^{2m}-r^{m}}{(1+r^{m})^{2}}
+ \frac{r^{2k}}{1-r^{k}}  >0.
\end{align*}
This contradicts \( R_{2,m,k} \) is a root of (\ref{1.4}).
Therefore $R_{2,m,k} < R_{2,m}.$\par

\medskip

{\bf Sharpness of the Radius $R_{2,m,k}$:} Next, we prove the sharpness for $R_{2,m,k}$. We consider the function define in (\ref{fa}).
Therefore, by a simple computation it follows that
\beas &&\mathcal{B}(f_a(z)):=|f_a(\mu_m(r v))|^2+|D f_a(\mu_m(r v)) \mu_m(r v)|+\sum_{s=2}^{\infty}|P_{s}(\mu_k(r v))|\\
&&+\left(\frac{1}{1+|a|}+\frac{r^k}{1-r^k}\right)
\sum_{s=1}^{\infty} |P_{s}(\mu_k(r v))|^2\\
&=&\left(\frac{a+r^m}{1+ar^m}\right)^2+\frac{(1-a^2)r^m}{\bigl(1+ar^m\bigr)^2}+(1-a^2)\sum_{s=2}^{\infty}a^{s-1}r^{sk}+\frac{(1-a^2)^2(1+|a|\,r^{k})}{(1+|a|)(1-r^{k})}\sum_{s=1}^{\infty}a^{2s-2}r^{2sk}\\
&=&1-\frac{(1-a^2)(1-r^{2m})}{(1+ar^m)^2}+\frac{(1-a^2)r^m}{\bigl(1+ar^m\bigr)^2}+(1-a^2) \frac{a\, r^{2k}}{1-a r^{k}}+\frac{(1-a^2)^2(1+|a|\,r^{k})}{(1+|a|)(1-r^{k})}\frac{r^{2k}}{1-a^{2} r^{2k}}\\
&=&1+(1-a^2) G_2(a,r),\eeas
where 
\[G_2(a,r)=-\frac{1-r^{2m}}{(1+ar^m)^2}+\frac{r^m}{\bigl(1+ar^m\bigr)^2}+\frac{a\, r^{2k}}{1-a r^{k}}+\frac{(1-a^2)(1+|a|\,r^{k})}{(1+|a|)(1-r^{k})}\frac{r^{2k}}{1-a^{2} r^{2k}}.\]
Let $a\rightarrow 1$. It follows that 

\beas \lim_{a\rightarrow 1}G_2(a,r)=-\frac{1-r^{2m}-r^m}{(1+r^m)^2}+\frac{r^{2k}}{1- r^{k}}>0,\;\text{for}\;r>R_{2,m,k}.\eeas
Hence $\mathcal{B}(f_a(z))>1$ for $r>R_{2,m,k}$.
 
\end{proof}

\section{{\bf Refined Bohr Inequalities for Holomorphic Mappings $F : B_X \to \ol{\mathbb{U}}^n, n\geq 2$  in Banach Spaces.}}

Recently, Jia et al.~\cite{JLP}, proved several Bohr-type inequalities. We first recall some of these results.
\begin{theo3.A} \cite[Theorem 11]{JLP} Let $F \colon B_X \to \mathbb{U}$ be a holomorphic function defined by
\[
F(z)=\sum_{j=0}^{\infty} P_j(z), \qquad z \in B_X,
\]
 such that  $ a := F(0)$. Then, for
$m,k \in \mathbb{N}$ and $p \in (0,2]$, we have
\begin{equation*} |a|^{p}+ \sum_{j=1}^{\infty} \bigl|P_{qj}(\mu_k(z))\bigr|+ \left( \frac{1}{1+|a|}+ \frac{r^{qk}}{1-r^{qk}} \right)\sum_{j=1}^{\infty} \bigl|P_{qj}(\mu_k(z))\bigr|^{2}\le 1,
\end{equation*}
for $\|z\| = r \le R_1$, where $R_1 := R^{(p)}_{k,m}$  is the unique root in $(0,1)$ of the equation
\begin{equation*}\frac{r^{qk}}{1-r^{qk}} - \frac{p}{2}=0.\end{equation*}
 The number $R_1$ is best possible.
\end{theo3.A}

It is natural to pose the following problem.

\begin{que3.1}
Do analogues of Theorems 1.A and 3.A remain valid for holomorphic
mappings $F \colon B_X \to \overline{\mathbb{U}^{n}}, \; n \ge 2,$
under appropriate assumptions on the initial coefficient vector $a = F(0)$ and with
suitable multidimensional versions of the Bohr radius ?
\end{que3.1}
 
The following are key lemma of this paper and will be used to prove the main results.
\begin{lem}\label{L3}
Let $B_{X}$ be the unit ball of a complex Banach space $X$. Let $a=(a_{1},\ldots,a_{n})\in \ol{\mathbb{U}^{n}}$ and let $F$ be a holomorphic mapping from $B_{X}$ to $\ol{\mathbb{U}^{n}}$ with
\[
F(z)=a+\sum_{s=1}^{\infty}P_{s}(z), \qquad z\in B_{X},q\in \mathbb{N}
\]
where $P_{n}(z)=\frac{1}{n!}D^{n}F(0)(z^{n})$ and   $\mu_k: B_{X}\to B_{X}$ are  Schwarz mappings having $z=0$ as a zero of order $k$. Assume that $|a_{j}|=\|a\|_{\infty}$ for all $j\in\{1,2,\ldots,n\}$.
Then, for each $k\in\mathbb{N}$, the inequality
\begin{equation}
\sum_{j=1}^{\infty} \|P_{qj}(\mu_k(z))\|_{\infty}
+\left(\frac{1}{1+\|a\|_{\infty}}+\frac{r^{qk}}{1-r^{qk}}\right)
\sum_{j=1}^{\infty} \|P_{qj}(\mu_k(z))\|_{\infty}^2
\le (1-\|a\|_{\infty}^2)\frac{r^{qk}}{1-r^{qk}}
\label{eq:lemma3}
\end{equation}
holds for $\|z\|=r\in[0,1)$.
\end{lem}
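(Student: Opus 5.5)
The plan is to reduce \eqref{eq:lemma3} to a one-variable statement on the disc, treating each coordinate of $F$ separately, and then to put the coordinates together; that last step is the main obstacle. Fix $z\in B_X\setminus\{0\}$ and write $A=\|a\|_\infty$. If $A=1$, then every coordinate $F_i$ satisfies $|F_i(0)|=1$ and maps $B_X$ into $\ol{\mathbb{U}}$. By the maximum principle each $F_i$ is constant, so all $P_s\equiv 0$ and \eqref{eq:lemma3} is trivial. Assume therefore $A<1$; then every $F_i$ maps $B_X$ into $\mathbb{U}$ and $|a_i|=A$ by hypothesis.

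\textbf{One-variable reduction.} Put $w=\mu_k(z)$. By \eqref{eq:Schwarz}, $\|w\|\le r^k$. Write $w_0=w/\|w\|$ (if $w=0$ there is nothing to prove). For each coordinate $i$, let $f_i(\zeta)=F_i(\zeta w_0)=a_i+\sum_s P_s^{(i)}(w_0)\zeta^s$, which is a self-map of $\mathbb{U}$. To isolate the degrees divisible by $q$, I would average over the $q$-th roots of unity:
\[
h_i(\zeta)=\frac1q\sum_{\omega^q=1}f_i(\omega\zeta)=a_i+\sum_{j\ge1}P^{(i)}_{qj}(w_0)\zeta^{qj}.
\]
Each $h_i$ is again a self-map of $\mathbb{U}$. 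Moreover $h_i(\zeta)=g_i(\zeta^q)$, where $g_i(\eta)=a_i+\sum_j b^{(i)}_j\eta^j$ with $b^{(i)}_j=P^{(i)}_{qj}(w_0)$ is a self-map of $\mathbb{U}$. Applying Lemma~\ref{L2} with $N=1$ (so $t=0$) to $g_i$ at radius $\rho=\|w\|^q\le r^{qk}$ gives
\[
\sum_{j\ge1}|P^{(i)}_{qj}(w)|+\Bigl(\frac1{1+A}+\frac{\rho}{1-\rho}\Bigr)\sum_{j\ge1}|P^{(i)}_{qj}(w)|^2\le (1-A^2)\frac{\rho}{1-\rho}.
\]
Here I use $|P^{(i)}_{qj}(w)|=|b^{(i)}_j|\rho^j$, which follows from homogeneity. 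Since $x\mapsto x/(1-x)$ is increasing and $\rho\le r^{qk}$, I would replace $\rho$ by $r^{qk}$ on the right-hand side. On the left-hand side I would keep $\rho$ for the moment; it has to be upgraded to $r^{qk}$, as in the passage from \eqref{te1.2} to \eqref{ne2}, but in the harder direction. This gives the scalar version of \eqref{eq:lemma3} for every coordinate, with the same right-hand side $(1-A^2)r^{qk}/(1-r^{qk})$. That the right-hand side is the same for all $i$ is exactly where the hypothesis $|a_i|=A$ for all $i$ enters.

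\textbf{Passing to the sup norm (main obstacle).} The difficulty is that $\|P_{qj}(w)\|_\infty=|P^{(i_j)}_{qj}(w)|$, where the maximizing coordinate $i_j$ may change with $j$. So the coordinatewise inequalities cannot simply be added or maximized. My first attempt would be to open up the proof of Lemma~\ref{L2} rather than use it as a black box. That proof uses only estimates of the form $|b_j|\le 1-|b_0|^2$ together with a Carlson/Wiener-type bound on $\sum_j|b_j|^2$ in terms of $1-|b_0|^2$. If the whole argument can be arranged so that only the individual numbers $|b_j|$ and a single quadratic constraint enter, then I would apply it directly to the nonnegative sequence $\beta_j=\max_i|b^{(i)}_j|$. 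The per-term bound $\beta_j\le 1-A^2$ holds uniformly in $i$ by the equal-moduli assumption; the real point is whether the quadratic constraint survives taking the maximum over $i$.

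If that fails, the fallback is a duality/convexity trick. Consider the scalar function $\zeta\mapsto\sum_i\lambda_iF_i(\zeta w_0)$ with unimodular phases and a convex choice of weights, which is again a disc self-map with $|F(0)|$-value controlled by $A$. The aim is to realize $\max_i$ termwise, but the weights would depend on $j$, so this is the step where I expect genuine extra work, and possibly an additional hypothesis, to be needed. Separately, the upgrade of the coefficient $\rho/(1-\rho)$ to $r^{qk}/(1-r^{qk})$ on the left-hand side also needs care, since it goes the unfavorable way; I would handle it by applying the estimate at radius $r^{qk}$ directly, using $|P_{qj}(w)|\le|P_{qj}(r^kw_0)|$ after rescaling $\mu_k$.
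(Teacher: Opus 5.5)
Your one-variable reduction is sound, and for $n=1$ it is a complete proof. Averaging over the $q$-th roots of unity gives a self-map $g_i(\eta)=a_i+\sum_j b^{(i)}_j\eta^j$ with $b^{(i)}_j=P^{(i)}_{qj}(w_0)$. Moreover $|P^{(i)}_{qj}(w)|=|b^{(i)}_j|\,\|w\|^{qj}\le |b^{(i)}_j|\,r^{qkj}$, and every coefficient on the left is nonnegative. So applying Lemma~\ref{L2} with $N=1$ to $g_i$ directly at radius $r^{qk}$ yields the coordinatewise form of \eqref{eq:lemma3}. This handles both the index $q$ and the ``unfavorable'' coefficient $r^{qk}/(1-r^{qk})$ correctly. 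The paper's route does not: its intermediate inequality \eqref{hhh1} already fails for $n=1$, $q=2$, $F(z)=z$, where the left side exceeds the right side by $r$.

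The gap is the passage to the sup norm, which you leave open, and it cannot be closed: for $n\ge 2$ the lemma is false. Take $X=\mathbb{C}$, $n=2$, $F(z)=(z^q,z^{2q})$ and $\mu_k(z)=z^k$. Then $a=0$ satisfies $|a_1|=|a_2|=\|a\|_\infty$. The only nonzero terms are $P_q(z)=(z^q,0)$ and $P_{2q}(z)=(0,z^{2q})$. With $\rho=r^{qk}$, the left side of \eqref{eq:lemma3} equals
\[
\rho+\rho^2+\frac{\rho^2+\rho^4}{1-\rho}=\frac{\rho}{1-\rho}+\rho^2+\frac{\rho^4}{1-\rho}>\frac{\rho}{1-\rho}
\]
for every $r\in(0,1)$, whereas the right side is $\rho/(1-\rho)$.

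Each coordinate alone satisfies the scalar inequality; the first one even gives equality at every radius. The trouble is that the sequence $\beta_j=\max_i|b^{(i)}_j|$ collects coefficients from different coordinates. Already the quadratic constraint you hoped to keep fails for it, since $\sum_j\beta_j^2=2>1-\|a\|_\infty^2$. So neither opening up Lemma~\ref{L2} nor a weighted combination of the $F_i$ can rescue the statement.

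The paper's own proof gets past this point only by replacing each $|(P_s)_j(z_0)|$ with its maximum over $j$ when going from \eqref{l1} to \eqref{l2}. That is precisely the invalid deduction you were worried about.

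What your argument does establish for $n\ge 2$ is the version with the maximum taken outside, namely $\max_i\bigl[\sum_j|P^{(i)}_{qj}(\mu_k(z))|+\bigl(\frac{1}{1+\|a\|_\infty}+\frac{r^{qk}}{1-r^{qk}}\bigr)\sum_j|P^{(i)}_{qj}(\mu_k(z))|^2\bigr]\le(1-\|a\|_\infty^2)\frac{r^{qk}}{1-r^{qk}}$. It also gives, directly from $\|P_{qj}(w_0)\|_\infty\le 1-\|a\|_\infty^2$, the sup-norm statement with the quadratic term dropped.
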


\begin{proof}
Let $z\in B_X\setminus\{0\}$ be fixed and define $z_0=\frac{z}{\|z\|_X}\in \partial B_X.$ Also let $f_{j}(\zeta)=F_{j}(\zeta z_{0}), \zeta\in\mathbb{\ol U}$ for all $j=1,2,\ldots,n$, 
where $F=(F_{1},\ldots,F_{n})$. Then $f_j$ are the holomorphic functions from  $\mathbb{U}$ to $\overline{\mathbb{U}}$ and
\[
f_{j}(\zeta)=a_{j}+\sum_{s=1}^{\infty}(P_{s})_{j}(z_{0})\zeta^{s},\qquad \zeta\in\mathbb{U}.
\]
Therefore, for each fixed $j$, we have (see e.g. \cite[p.\,35]{GK2003})
\bea\label{l0}
|(P_{s})_{j}(z_{0})| \le 1-|a_{j}|^{2}=1-\|a\|_{\infty}^{2}.
\eea
and  applying Lemma \ref{L2}  for each $j\in \{1,2,\ldots,n\}$, we get
\bea\label{l1} &&\sum_{s=1}^{\infty}|(P_{s})_{j}(z_{0})|\,|\zeta|^{s}
+\left(\frac{1}{1+|a_j|}+\frac{|\zeta|}{1-|\zeta|}\right)
\sum_{s=1}^{\infty} |(P_{s})_{j}(z_{0})|^2 |\zeta|^{2s}\nonumber\\
& \le & (1-|a_j|^2)\frac{|\zeta|}{1-|\zeta|}=(1-\|a\|_{\infty}^2)\frac{|\zeta|}{1-|\zeta|}.
\eea
Since (\ref{l1}) are hold for all $j\in \{1,2,\ldots,n\}$, then we deduce that 
\bea\label{l2} &&\sum_{s=1}^{\infty}\max_{j\in\{1,2,\ldots,n\}}|(P_{s})_{j}(z_{0})|\,|\zeta|^{s}
+\left(\frac{1}{1+|a_j|}+\frac{|\zeta|^q}{1-|\zeta|^q}\right)
\sum_{s=1}^{\infty} \max_{j\in\{1,2,\ldots,n\}} |(P_{s})_{j}(z_{0})|^2 |\zeta|^{2s}\nonumber\\
& \le &(1-\|a\|_{\infty}^2)\frac{|\zeta|}{1-|\zeta|}.
\eea
Form (\ref{l0}), we get 
\[\|P_{s}(z_0)\|_{\infty}:=\max_{j\in \{1,2,\ldots,n\}} |(P_{qs})_{j}(z_{0})|\leq 1-\|a\|_{\infty}^{2}.\]
Let $\zeta=\|z\|_{X}^q=r^q<1$. Then, by the above equation and (\ref{l2}), it follows that
\bea\label{hhh1} \sum_{s=1}^{\infty}\|P_{s}(z)\|_{\infty} +\left(\frac{1}{1+\|a\|_{\infty}}+\frac{\|z\|_X^q}{1-\|z\|_X^q}\right)
\sum_{s=1}^{\infty}  \|P_{s}(z)\|_{\infty}^{2}
\le (1-\|a\|_{\infty}^2)\frac{\|z\|_X^q}{1-\|z\|_X^q}.\eea
Note that $s(x)=\frac{x}{1-x}$. Then $s'(x)=\frac{1}{(1-x)^2}>0,$ so $s(x)$ is increasing on $(0,1)$, since $r^{qk}\leq r^q$ for $k\geq 1$, it follows that 
\[s(r^{qk})\leq s(r^q).\]
Hence 
\[\frac{r^{qk}}{1-r^{qk}}\leq \frac{r^q}{1-r^q} =\frac{\|z\|_X^q}{1-\|z\|_X^q}.\]
That is 
\[\frac{1}{1+\|a\|_{\infty}}+\frac{r^{qk}}{1-r^{qk}}\leq \frac{1}{1+\|a\|_{\infty}}+\frac{\|z\|_X^q}{1-\|z\|_X^q}\]
Also 
\[\sum_{s=1}^{\infty}\|P_{qs}(z)\|_{\infty}\leq \sum_{s=1}^{\infty}\|P_{s}(z)\|_{\infty}\;\text{and}\; \sum_{s=1}^{\infty}\|P_{qs}(z)\|_{\infty}^2\leq \sum_{s=1}^{\infty}\|P_{s}(z)\|_{\infty}^2.\]
Therefore applying above inequalities, we get
\bea\label{ne3} &&\sum_{s=1}^{\infty}\|P_{qs}(z)\|_{\infty}+\left(\frac{1}{1+\|a\|_{\infty}}+\frac{r^{qk}}{1-r^{qk}}\right)\sum_{s=1}^{\infty}\|P_{qs}(z)\|_{\infty}^2\nonumber\\
&\leq & \sum_{s=1}^{\infty}\|P_{qs}(z)\|_{\infty}+\left(\frac{1}{1+\|a\|_{\infty}}+\frac{\|z\|_X^q}{1-\|z\|_X^q}\right)\sum_{s=1}^{\infty}\|P_{qs}(z)\|_{\infty}^2\nonumber\\
&\leq & \sum_{s=1}^{\infty}\|P_{s}(z)\|_{\infty}+\left(\frac{1}{1+\|a\|_{\infty}}+\frac{\|z\|_X^q}{1-\|z\|_X^q}\right)\sum_{s=1}^{\infty}\|P_{s}(z)\|_{\infty}^2.\eea
Hence from (\ref{hhh1}) and (\ref{ne3}), we get
\bea\label{ne4} \sum_{s=1}^{\infty}\|P_{qs}(z)\|_{\infty}+\left(\frac{1}{1+\|a\|_{\infty}}+\frac{r^{qk}}{1-r^{qk}}\right)\sum_{s=1}^{\infty}\|P_{qs}(z)\|_{\infty}^2\leq (1-\|a\|_{\infty}^2)\frac{\|z\|^q}{1-\|z\|^q}.\eea

In view of  (\ref{eq:Schwarz}) and (\ref{ne4}), we obtain  
\beas &&\sum_{s=1}^{\infty}\|P_{qs}(\mu_k(z))\|_{\infty} +\left(\frac{1}{1+\|a\|_{\infty}}+\frac{r^{qk}}{1-r^{qk}}\right)
\sum_{s=1}^{\infty}  \|P_{qs}(\mu_k(z))\|_{\infty}^{2} \\
&\leq &(1-\|a\|_{\infty}^2)\frac{\|\mu_k(z)\|^q}{1-\|\mu_k(z)\|^q}\;\left( \text{as $\frac{x}{1-x}$ is increasing $(0,1)$}\right)\\
& \leq & (1-\|a\|_{\infty}^2)\frac{r^{qk}}{1-r^{qk}}.\eeas
\end{proof}
\begin{lem}\label{L4}\cite{CLP2023}
Let $m \in \mathbb{N}$ and $p \in (0,2]$. For $a \in [0,1]$, consider
\[
D_{p,m}(a)
= \left( \left( \frac{a + r^{m}}{1 + a r^{m}} \right)^{p} - 1 \right)\varphi_{0}(r)
+ (1-a^{2})\,N(r),
\]
where $\varphi_{0}(r)$ and $N(r)$ are nonnegative continuous functions defined on $[0,1)$.
Suppose further that
\[
\Phi_{p,m}(r)
= p\left( \frac{1 - r^{m}}{1 + r^{m}} \right)\varphi_{0}(r)
- 2N(r),
\]
and let $R := R(m,p)$ be the minimal positive root in $(0,1)$ of the equation
$\Phi_{p,m}(r) = 0.$
If $\Phi_{p,m}(r) \ge 0$ for $0 \le r \le R$, then
$D_{p,m}(a) \le 0 \quad \text{for } 0 \le r \le R.$
\end{lem}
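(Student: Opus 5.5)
The plan is a direct pointwise estimate in $a$, with $r$ fixed. Fix $r\in[0,R]$, put $\rho=r^m\in[0,1)$ and
\[
t=\frac{a+\rho}{1+a\rho}\in[0,1].
\]
The term $(t^p-1)\varphi_0(r)$ is nonpositive and $(1-a^2)N(r)$ is nonnegative. So I will try to bound $1-t^p$ from below by a multiple of $1-a^2$, so that the factor $1-a^2$ can be pulled out of $D_{p,m}(a)$. The case $a=1$ is trivial, since then $t=1$ and $D_{p,m}(1)=0$; so assume $a\in[0,1)$.

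The first step is the elementary inequality
\[
1-u^{p/2}\ \ge\ \frac{p}{2}\,(1-u),\qquad u\in[0,1],\ p\in(0,2].
\]
It holds because $u\mapsto u^{p/2}$ is concave for $p/2\le 1$, so its graph lies below the tangent line at $u=1$, namely $1+\frac{p}{2}(u-1)$. Applying it with $u=t^2$ gives $1-t^p\ge \frac p2(1-t^2)$. Next, a direct computation with the disk automorphism gives
\[
1-t^2=\frac{(1-a^2)(1-\rho^2)}{(1+a\rho)^2}\ \ge\ \frac{(1-a^2)(1-\rho^2)}{(1+\rho)^2}=(1-a^2)\,\frac{1-\rho}{1+\rho},
\]
using $1+a\rho\le 1+\rho$.

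Combining these two bounds with $\varphi_0\ge 0$ yields
\[
D_{p,m}(a)\le (1-a^2)\Big[N(r)-\frac p2\,\frac{1-r^m}{1+r^m}\,\varphi_0(r)\Big]=-\frac{1-a^2}{2}\,\Phi_{p,m}(r).
\]
This is nonpositive by the hypothesis $\Phi_{p,m}(r)\ge 0$ on $[0,R]$, which proves the lemma.

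The only delicate point is choosing the right lower bound for $1-t^p$. Linearizing in $1-t$ gives $1-t^p\ge p(1-t)$, but that bound holds only for $p\le 1$. For $p\in(1,2]$ one gets only the weaker factor $pt^{p-1}$, and the leftover factor $1+a$ from $1-a^2$ would then have to be crudely bounded by $2$. Linearizing in $1-t^2$ instead, via concavity of $s\mapsto s^{p/2}$, handles the whole range $p\in(0,2]$ uniformly. It also produces exactly the factor $1-a^2$ that appears alongside $N(r)$, so no loss occurs.
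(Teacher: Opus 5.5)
Your proof is correct. The concavity bound $1-u^{p/2}\ge \frac p2(1-u)$ at $u=t^2$, the identity $1-t^2=(1-a^2)(1-\rho^2)/(1+a\rho)^2$, and $1+a\rho\le 1+\rho$ together give $D_{p,m}(a)\le -\tfrac{1-a^2}{2}\,\Phi_{p,m}(r)$, and every step checks out. The case $a=1$ does not even need separate treatment.

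The paper gives no proof of this lemma; it is quoted from \cite{CLP2023}, so there is no in-paper argument to compare against.

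A natural alternative route is monotonicity in $a$. One has $\partial_a D_{p,m}=p\,t^{p-1}\frac{1-\rho^2}{(1+a\rho)^2}\varphi_0(r)-2aN(r)$. Under $\Phi_{p,m}(r)\ge 0$ this is nonnegative, because $t\ge a$ gives $t^{p-1}\ge a$: for $p\le 1$ use $t^{p-1}\ge 1$, and for $1<p\le 2$ use $t^{p-1}\ge t$. Hence $D_{p,m}(a)\le D_{p,m}(1)=0$.

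Your concavity argument avoids that case split in $p$ and yields more.
\begin{itemize}
\item Your bound holds pointwise for every $r\in[0,1)$, so the minimal-root hypothesis only serves to name the interval.
\item Both of your estimates are asymptotically sharp as $a\to 1^-$, so $\lim_{a\to1^-}D_{p,m}(a)/(1-a^2)=-\tfrac12\Phi_{p,m}(r)$.
\end{itemize}
Consequently $\Phi_{p,m}(r)\ge 0$ is also necessary for $D_{p,m}\le 0$ at that $r$. This is exactly the mechanism behind the paper's sharpness arguments that send $b\to 1^-$, for instance the function $G_3$ in the proof of Theorem~\ref{T2.1}.
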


\section*{{\bf Main results and their proofs.}}
In this section, we resolve Question 3.1 and present the following main results.
\begin{theo}\label{T2.1}
Let $B_{X}$ be the unit ball of a complex Banach space $X$. Let $a=(a_{1},\ldots,a_{n})\in \ol{\mathbb{U}^{n}}$ and let $F$ be a holomorphic mapping from $B_{X}$ to $\ol{\mathbb{U}^{n}}$ with
\[
F(z)=a+\sum_{s=1}^{\infty}P_{s}(z), \qquad z\in B_{X},
\]
where $P_{s}(z)=\frac{1}{s!}D^{s}F(0)(z^{s}).$ Assume that $|a_{j}|=\|a\|_{\infty}$ for all $j\in\{1,2,\ldots,n\}$. Then for $m,k\in\mathbb{N}$ and $p\in (0,2]$, we have
\begin{equation}\label{e2.1}
\|F( \mu_m(z)) \|_{\infty}^p +  \sum_{s=1}^{\infty} \| P_{2s}(\mu_k(z)) \|_{\infty}  \leq 1.
\end{equation}
for $\|z\|=r\le R_{1,k,m,p}$, where $\mu_k,\mu_m:B_{X}\to B_{X}$ are Schwarz mappings having $z=0$ as a zero of order $k,m$ respectively and $R_{1,k,m,p}$ is the unique root in $(0,1)$ of the equation
\begin{equation}\label{e2.2} p\left(\frac{1-r^{m}}{1+r^{m}}\right)- \frac{2r^{2k}}{1-r^{2k}}=0.\end{equation}
The constant $R_{1,k,m,p}$ cannot be improved.
\end{theo}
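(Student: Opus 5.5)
Three ingredients combine: a componentwise majorant for the even-index sum, Lemma \ref{L1} for the term $\|F(\mu_m(z))\|_\infty^p$, and Lemma \ref{L4} for the resulting one-variable inequality.

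\textbf{Even-index sum.} Fix $z\in B_X\setminus\{0\}$ and put $z_0=z/\|z\|$. For each $j$ consider the slice functions $f_j(\zeta)=F_j(\zeta z_0)$, which map $\mathbb{U}$ into $\overline{\mathbb{U}}$. Assuming $\|a\|_\infty<1$ (otherwise each $f_j$ is constant by the maximum principle, and \eqref{e2.1} is trivial), we have the coefficient estimate $|(P_s)_j(z_0)|\le 1-|a_j|^2=1-\|a\|_\infty^2$ for $s\ge1$. Summing only over even indices gives
\[
\sum_{s=1}^\infty |(P_{2s})_j(z_0)|\rho^{2s}\le (1-\|a\|_\infty^2)\frac{\rho^2}{1-\rho^2}.
\]
Since the right side is independent of $j$, the same bound holds for $\max_j$ termwise. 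Putting $\rho=\|\mu_k(z)\|$ and writing $\mu_k(z)=\rho w_0$ with $\|w_0\|=1$, we obtain $\sum_s\|P_{2s}(\mu_k(z))\|_\infty\le(1-\|a\|_\infty^2)\rho^2/(1-\rho^2)$. Because $x/(1-x)$ is increasing and \eqref{eq:Schwarz} gives $\rho\le r^k$, this is at most $(1-x^2)\frac{r^{2k}}{1-r^{2k}}$, where $x=\|a\|_\infty$. Alternatively, this is Lemma \ref{L3} with $q=2$, discarding the nonnegative squared term.

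\textbf{The term $\|F(\mu_m(z))\|_\infty^p$.} Apply Lemma \ref{L1} componentwise (or with $Y=\mathbb{C}^n$ under the sup norm, approximating by $tF$ with $t<1$ if the image meets the boundary). With $\|\mu_m(z)\|\le r^m$ and monotonicity of $(x+t)/(1+xt)$ in $t$, this yields $\|F(\mu_m(z))\|_\infty\le \frac{x+r^m}{1+xr^m}$. The left side of \eqref{e2.1} is therefore at most
\[
1+\left(\left(\frac{x+r^m}{1+xr^m}\right)^p-1\right)+(1-x^2)\frac{r^{2k}}{1-r^{2k}},
\]
which equals $1+D_{p,m}(x)$ in Lemma \ref{L4} with $\varphi_0\equiv1$ and $N(r)=r^{2k}/(1-r^{2k})$. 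Then $\Phi_{p,m}(r)=p\frac{1-r^m}{1+r^m}-\frac{2r^{2k}}{1-r^{2k}}$, which is exactly the left side of \eqref{e2.2}.

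\textbf{Uniqueness of the root and conclusion.} $\Phi_{p,m}$ is strictly decreasing on $(0,1)$, since the first term decreases and the second increases. Moreover $\Phi_{p,m}(0)=p>0$ and $\Phi_{p,m}(r)\to-\infty$ as $r\to1^-$. Hence $R_{1,k,m,p}$ is the unique root in $(0,1)$, and $\Phi_{p,m}\ge0$ on $[0,R_{1,k,m,p}]$. Lemma \ref{L4} then gives $D_{p,m}(x)\le0$, which proves \eqref{e2.1} for $r\le R_{1,k,m,p}$.

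\textbf{Sharpness.} This is the delicate step, because the extremal mapping must have all components with equal modulus at $0$. Take $F(z)=(f_a(z),\dots,f_a(z))$ with $f_a$ as in \eqref{fa}, $a\in(0,1)$, and $\mu_n(z)=T_v(z)^{n-1}z$, and evaluate at $z=rv$. Then $P_{2s}(\mu_k(rv))$ has sup norm $(1-a^2)a^{2s-1}r^{2sk}$, so the even sum is $(1-a^2)\frac{ar^{2k}}{1-a^2r^{2k}}$, while $\|F(\mu_m(rv))\|_\infty=\frac{a+r^m}{1+ar^m}$. The expression equals $1+(1-a)G(a,r)$. Using $\frac{d}{da}\bigl(\frac{a+r^m}{1+ar^m}\bigr)^p\big|_{a=1}=p\frac{1-r^m}{1+r^m}$, one gets
\[
\lim_{a\to1^-}G(a,r)=-p\frac{1-r^m}{1+r^m}+\frac{2r^{2k}}{1-r^{2k}}=-\Phi_{p,m}(r),
\]
which is positive for $r>R_{1,k,m,p}$. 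So the sum exceeds $1$ for $a$ close to $1$, and the radius cannot be improved. The main point to check is this first-order expansion in $1-a$, in particular that the $p$-th power contributes exactly $p(1-r^m)/(1+r^m)$.
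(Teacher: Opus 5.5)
Your proposal is correct and follows essentially the same route as the paper. The paper also uses slice functions with the coefficient bound $|(P_s)_j(z_0)|\le 1-\|a\|_\infty^2$ summed over even indices, then Lemma \ref{L1} with \eqref{eq:Schwarz} for $\|F(\mu_m(z))\|_\infty$, and finally Lemma \ref{L4} with $\varphi_0\equiv 1$ and $N(r)=r^{2k}/(1-r^{2k})$. Your extremal mapping matches the paper's $F_1(z)=f(l_{z_0}(z))u$ up to the sign normalization of the M\"obius map and of $\mu_n$, and the monotonicity argument you give for uniqueness of the root is left implicit in the paper.
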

\begin{rem} In the special case where $B_X = B_Y = \mathbb{U}$, $\mu_m(z)=\mu_k(z)=z$, and $p=1$, 
Theorem~\ref{T2.1} reduces to Theorem~1.A.

\end{rem}
\begin{table}[H]
\centering
\begin{minipage}[b]{0.45\textwidth}
\centering
\begin{tabular}{|c|c|c|c|}
\hline
\textbf{k} & \textbf{m} & \textbf{p} & \textbf{$R_{1,k,m,p}$} \\ 
\hline
1 & 1 & 1& 0.414200 \\ 
\hline
1 & 1 & 2& 0.500000 \\ 
\hline
2 & 2 & 3 & 0.742000 \\ 
\hline
3 & 2 & 4& 0.813300 \\ 
\hline
\end{tabular}
\caption{$R_{1,k,m,p}$ is the unique root of equation (\ref{e2.2}) in $(0,1)$.}
\label{tab3.3}
\end{minipage}%
\hfill
\begin{minipage}[b]{0.55\textwidth}
\centering
\begin{figure}[H]
\centering
\includegraphics[width=\textwidth]{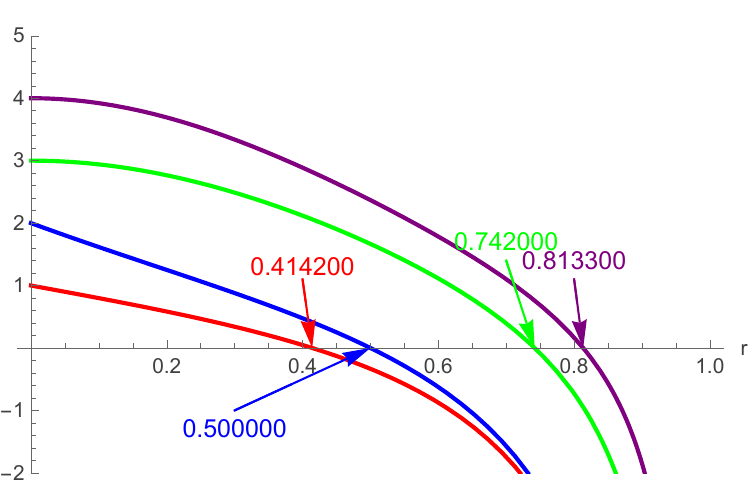}
\caption{The graphs exhibit the locations of the roots $R_{1,k,m,p}$ in $(0,1)$ for different values of $k,m,p$.}
\label{f33}
\end{figure}
\end{minipage}
\end{table}

\begin{proof}[{\bf Proof of Theorem \ref{T2.1}}]
Let $z\in B_X\setminus\{0\}$ be fixed and define $z_0=\frac{z}{\|z\|_X}\in \partial B_X$.  
We consider the holomorphic functions $f_j : \mathbb{U} \to \overline{\mathbb{U}}$ defined by
\[
f_j(\zeta) = F_j(\zeta z_0), \qquad \zeta \in \mathbb{U}
\]
where $F=(F_1,\ldots,F_n)$. Then 
\begin{equation}\label{eq2.1}
f_j(\zeta)=a_j+\sum_{s=1}^{\infty}(P_s)_j(z_0)\zeta^s,
\qquad \zeta\in\mathbb{U}.
\end{equation}
Hence, for each fixed $j$ (see, e.g., \cite[p.~35]{GK2003}),
$|(P_s)_j(z_0)|\leq 1-|a_j|^2=1-\|a\|_{\infty}^2.$ Note that \[\|(P_s(z_0)\|_{\infty}=\max\{|(P_s)_{1}(z_0)|,|(P_s)_{2}(z_0)| \cdots |(P_s)_{n}(z_0)| \} \leq 1-\|a\|_{\infty}^2\]
Consequently
\begin{equation}\label{eq2.1}
\|P_{2s}(z_0)\|_{\infty}\le 1-\|a\|_{\infty}^2 .
\end{equation}
Now 
\begin{align*}
\sum_{s=1}^{\infty} \left| (P_{2s})_j(z_0) \right| |\zeta|^{2s}
&\le \sum_{s=1}^{\infty} \left(1-\|a\|_{\infty}^2\right) |\zeta|^{2s}\\
&= \left(1-\|a\|_{\infty}^2\right) \frac{|\zeta|^{2}}{1-|\zeta|^{2}}  \qquad \forall j=1,2,\ldots,n.
\end{align*}
Note that \[ \|P_{2s}(z_0)\|_{\infty}=\max \{\left| (P_{2s})_1(z_0) \right|,\left| (P_{2s})_2(z_0) \right|, \cdots, \left| (P_{2s})_n(z_0) \right|\}.\]
Consequently we get 
\begin{equation}\label{pd1}
\sum_{s=1}^{\infty} {\| P_{2s}(z_0) \|}_{\infty} |\zeta|^{2s} \leq \left(1-\|a\|_{\infty}^2\right) \frac{|\zeta|^{2}}{1-|\zeta|^{2}}
\end{equation}
Setting $\zeta=\|z\|_X=r<1$, from (\ref{pd1}) we get
\begin{align*}
\sum_{s=1}^{\infty} \| P_{2s}(z_0 \zeta) \|_{\infty}
&= \sum_{s=1}^{\infty} \left\| P_{2s}\!\left(\frac{z}{\|z\|_{X}}\,\|z\|_X\right) \right\|_{\infty} \\
&\le \left(1-\|a\|_{\infty}^2\right)\frac{\|z\|^{2}}{1-\|z\|^{2}},
\end{align*}
i.e
\begin{align}\label{pd2}
\sum_{s=1}^{\infty} \| P_{2s}(z) \|_{\infty} \leq (1-\|a\|_{\infty}^2)\frac{\|z\|_X^2}{1-\|z\|_X^2}.
\end{align}
Note that using Lemma \ref{L1} and (\ref{eq:Schwarz}), we get
\[\|F(\mu_m(z))\|_{\infty} \le \left(\frac{\|a\|_{\infty}+\|\mu_m(z))\|_{X}}{1+\|a\|_{\infty}\|\mu_m(z))\|_{X}}.\right)\]
Let 
\[
\phi(t)=\frac{\|a\|_{\infty}+t}{1+\|a\|_{\infty} t}, \qquad t=\|\mu_m(z)\|_X, t_1=\|z\|_X^{m}.
\]
Then $\phi'(t)=\frac{1-\|a\|_{\infty}^{2}}{(1+\|a\|_{\infty}t)^{2}}>0$ and hence \(\phi\) is increasing on \([0,1)\). Therefore, for \(t\le t_1\), we get
$\phi(t)\le \phi(t_1).$ Consequently, we obtain
\begin{equation}\label{pd5}
\left(
\frac{\|a\|_{\infty}+\|\mu_m(z)\|_{X}}
{1+\|a\|_{\infty}\,\|\mu_m(z)\|_{X}}
\right)^p
\le
\left(
\frac{\|a\|_{\infty}+\|z\|_{X}^{m}}
{1+\|a\|_{\infty}\,\|z\|_{X}^{m}}
\right)^p .
\end{equation}
Let $x=\|a\|_{\infty}\in[0,1]$. If $x=1$, then $P_s(z_0)=0$ for all $s\ge1$, and
\eqref{e2.1} holds trivially. Hence, we assume $x\in[0,1)$.
Now using Eq. (\ref{pd2}) and (\ref{pd5}) we get

\begin{align*}\label{pd6}
\|F( \mu_m(z)) \|_{\infty}^p + \sum_{s=1}^{\infty} \| P_{2s}(\mu_k(z)) \|_{\infty}  
&\leq \left(
\frac{x+r^{m}}
{1+x r^{m}}
\right)^p + (1-x^2)\frac{r^{2k}}{1-r^{2k}} \\
&= 1 + D_{3}(x,r)
\end{align*}
 
where

\[
D_{3}(x,r)
= \left(\frac{x+r^m}{1+xr^m}\right)^p - 1
+ \left(1-x^2\right)\frac{r^{2k}}{1-r^{2k}},
\qquad x=\|a\|_{\infty}.
\]
By Lemma \ref{L4}, we deduce that $D_{3}(x,r)\leq 0$, for $0\leq r\leq R_{1,k,m,p}$,
where $R_{1,k,m,p}$ is the minimal positive root in $(0,1)$ of the equation $p\left(\frac{1-r^{m}}{1+r^{m}} \right)- \frac{2r^{2k}}{1-r^{2k}}=0.$\par

\medskip

{\bf Optimality of the Radius $R_{1,k,m,p}$.} 
We next verify that the radius $R_{1,k,m,p}$ cannot be improved.

Fix $z_0\in\partial B_X$ and $u=(u_1,u_2\ldots,u_n)\in\partial\mathbb{U}_n$ with
$|u_1|=\cdots=|u_n|=1$.
For $b\in(0,1)$, define
\begin{equation}\label{F1}
F_1(z)=f(l_{z_0}(z))u,\qquad z\in B_X,
\end{equation}
where
\[
f(\zeta)=\frac{b-\zeta}{1-b\zeta},\qquad \zeta\in\mathbb{U},
\]
and $l_{z_0}\in T(z_0)$.
Let $\mu_m(z)=-l_{z_0}(z)^{\,m-1}z$ for $m\geq 1$.
Then, for $r\in(0,1)$,
\[
F_1(rz_0)=\frac{b-r}{1-br}\,u
= a+\sum_{n=1}^{\infty}P_n(rz_0),
\]
where $a=bu$ and $P_n(z_0)=-b^{\,n-1}(1-b^2)u, n\ge 1.$
Consequently,
\[
P_n(\mu_k(rz_0))
=-r^{kn}b^{\,n-1}(1-b^2)u.
\]
Thus for $z=rz_0$  we have
\beas \|F( \mu_m(rz_0)) \|_{\infty}^p + \sum_{s=1}^{\infty} \| P_{2s}(\mu_k(rz_0)) \|_{\infty} &=&\left(\frac{b+r^m}{1+br^m}\right)^p+\frac{b(1-b^2) r^{2k}}{1-b^2 r^{2k}}\\
&=&1+(1-b)G_3(b,r),\eeas
where $G_3(b,r)=\frac{1}{1-b}\left( \left(\frac{b+r^m}{1+br^m}\right)^p-1\right)+\frac{(1+b) r^{2k}}{1-b r^{2k}}$. Let $r\in (R_{1,k,m,p},1)$ be arbitrary fixed. Since $R_{1,k,m,p}$ satisfies Eq (\ref{e2.2}), it follows that 
\[-p\left(\frac{1-r^{m}}{1+r^{m}} \right)+ \frac{2r^{2k}}{1-r^{2k}}>0\]
for $r\in (R_{1,k,m,p},1)$. Therefore 

\[\lim_{b\rightarrow 1^-}G_3(b,r)=-p\left(\frac{1-r^{m}}{1+r^{m}} \right)+ \frac{2r^{k}}{1-r^{k}}>0.\]
This implies $\|F( \mu_m(rz_0)) \|_{\infty}^p + \big( \sum_{s=1}^{\infty} \| P_{2s}(\mu_k(rz_0)) \|_{\infty} \big)>1 $  for $r\in (R_{1,k,m,p},1)$.
Thus $R_{1,k,m,p}$ is optimal. 

\end{proof}
The following example shows that the condition
$|a_j|=\|a\|_{\infty}$ for all $j\in\{1,2,\ldots,n\}$
cannot, in general, be relaxed.
\begin{exm}\label{ex1}
Let $n=2$, $z_{0}\in \partial B_{X}$ and $l_{z_{0}}\in T(z_{0})$. 
Let $\alpha,\beta\in \mathbb{R}$ satisfy $\frac{1}{\sqrt{3}}<\alpha<\beta<1.$
For $m\ge 1$, define $\mu_m(z)=-l_{z_{0}}(z)^{\,m-1}z .$
Define the holomorphic mapping $F:B_{X}\to \overline{\mathbb{U}}^{2}$ by
\bea\label{E1}
F(z)=\left(
\frac{\alpha-l_{z_{0}}(z)}{1-\alpha l_{z_{0}}(z)},
\frac{\beta-l_{z_{0}}(z)}{1-\beta l_{z_{0}}(z)}
\right), \qquad z\in B_{X}.
\eea

For $s\ge 1$, $P_{2s}(rz_{0})
=-\bigl((1-\alpha^{2})\alpha^{2s-1}r^{2s},\,
(1-\beta^{2})\beta^{2s-1}r^{2s}\bigr).$
Consequently, $P_{2s}(\mu_{k}(rz_{0}))
=-\bigl((1-\alpha^{2})\alpha^{2s-1}r^{2ks},\,
(1-\beta^{2})\beta^{2s-1}r^{2ks}\bigr),$
and hence $
\|P_{2s}(\mu_{k}(rz_{0}))\|_{\infty}
=(1-\alpha^{2})\alpha r^{2k},  s=1$
while $\|P_{2s}(\mu_{k}(rz_{0}))\|_{\infty}
\ge (1-\beta^{2})\beta^{2s-1}r^{2ks},  s\ge 2.$\par

Hence,
\begin{align*}
&\|F(\mu_m(rz_0))\|_{\infty}^p
+\sum_{s=1}^{\infty}\|P_{2s}(\mu_{k}(rz_{0}))\|_{\infty}
  \\
&= \left(\frac{\beta+r^m}{1+\beta r^m}\right)^p+(1-\alpha^{2})\alpha r^{2k}+\sum_{s=2}^{\infty}\|P_{2s}(\mu_{k}(rz_{0}))\|_{\infty}\\
&\geq \left(\frac{\beta+r^m}{1+\beta r^m}\right)^p+(1-\alpha^{2})\alpha r^{2k}+\sum_{s=2}^{\infty}(1-\beta^{2})\beta^{2s-1}r^{2ks}\\
&\geq \left(\frac{\beta+r^m}{1+\beta r^m}\right)^p+(1-\alpha^{2})\alpha r^{2k}+\frac{(1-\beta^{2})\,\beta^{3} r^{4k}}{1-\beta^{2} r^{2k}}.
\end{align*}
Now by taking $\beta\rightarrow 1^-$, we get $\|F(\mu_m(rz_0))\|_{\infty}^p
+\sum_{s=1}^{\infty}\|P_{2s}(\mu_{k}(rz_{0}))\|_{\infty}\geq 1+(1-\alpha^{2})\alpha r^{2k},$
which implies there is no-Bohr radius in this case.
\end{exm}

\begin{theo}\label{T1.2}
Let $B_{X}$ be the unit ball of a complex Banach space $X$. Let $a=(a_{1},\ldots,a_{n})\in \ol{\mathbb{U}^{n}}$ and let $F$ be a holomorphic mapping from $B_{X}$ to $\ol{\mathbb{U}^{n}}$ with $F(z)=a+\sum_{s=1}^{\infty}P_{s}(z),  z\in B_{X},$
where $P_{n}(z)=\frac{1}{n!}D^{n}F(0)(z^{n}), \;\forall n\geq 1$. Assume that $|a_{j}|=\|a\|_{\infty}$ for all $j\in\{1,2,\ldots,n\}$. Then for $m,k,q\in\mathbb{N}$ and $p\in (0,2]$, we have
\begin{equation}\label{2.1}
\|a\|_{\infty}^{p} + \sum_{s=1}^{\infty}\|P_{qs}(\mu_k(z))\|_{\infty}+ \left(\frac{1}{1+\|a\|_{\infty}}+\frac{r^{qk}}{1-r^{qk}}\right)\sum_{s=1}^{\infty}\|P_{qs}(\mu_k(z))\|_{\infty}^2\le 1.
\end{equation}
for $\|z\|=r\le R_{1,k,p,q}$, where $\mu_k:B_{X}\to B_{X}$ are Schwarz mappings having $z=0$ as a zero of order $k$ and $R_{1,k,p,q}$ is the unique root in $(0,1)$ of the equation
\begin{equation}\label{2.2}\frac{r^{qk}}{1-r^{qk}}  - \frac{p}{2}=0.\end{equation}
The constant $R_{1,k,p,q}$ cannot be improved.
\end{theo}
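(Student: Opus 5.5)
The plan is to reduce \eqref{2.1} to a one-variable inequality in $x=\|a\|_{\infty}$ by means of Lemma~\ref{L3}, and then to test sharpness on a vector-valued M\"obius map as in the proof of Theorem~\ref{T2.1}.

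\textbf{Reduction.} Lemma~\ref{L3} applies directly, since the hypothesis $|a_j|=\|a\|_{\infty}$ for all $j$ is exactly what it requires. It gives
\[
\sum_{s=1}^{\infty}\|P_{qs}(\mu_k(z))\|_{\infty}+\left(\frac{1}{1+x}+\frac{r^{qk}}{1-r^{qk}}\right)\sum_{s=1}^{\infty}\|P_{qs}(\mu_k(z))\|_{\infty}^2\le (1-x^2)\frac{r^{qk}}{1-r^{qk}}.
\]
The left side of \eqref{2.1} is therefore at most
\[
1+\Psi(x),\qquad \Psi(x)=x^p-1+(1-x^2)\lambda,\qquad \lambda:=\frac{r^{qk}}{1-r^{qk}} .
\]
If $x=1$, all $P_s$ vanish and \eqref{2.1} holds trivially, so we may take $x\in[0,1)$. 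It then suffices to show $\Psi(x)\le0$ on $[0,1]$ whenever $\lambda\le p/2$.

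\textbf{Elementary inequality.} Since $\Psi$ is increasing in $\lambda$, it is enough to treat $\lambda=p/2$, i.e.\ to show
\[
g(x)=x^p-1+\tfrac p2(1-x^2)\le0 .
\]
We have $g(1)=0$ and $g'(x)=p\,(x^{p-1}-x)$, and $x^{p-1}\ge x$ on $(0,1]$ because $p-1\le1$. Hence $g$ is nondecreasing and $g(x)\le g(1)=0$. The map $r\mapsto r^{qk}/(1-r^{qk})$ increases from $0$ to $\infty$, so the condition $\lambda\le p/2$ is exactly $r\le R_{1,k,p,q}$. This also shows that $R_{1,k,p,q}$ is the unique root of \eqref{2.2}; explicitly $R_{1,k,p,q}=\bigl(p/(p+2)\bigr)^{1/(qk)}$.

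\textbf{Sharpness.} This is the main step. Take $F_1(z)=f(l_{z_0}(z))u$ as in \eqref{F1}, with $b\in(0,1)$ and $\mu_k(z)=l_{z_0}(z)^{k-1}z$. Then $\|P_{qs}(\mu_k(rz_0))\|_{\infty}=(1-b^2)b^{qs-1}r^{qks}$ and $\|a\|_\infty=b$, so summing the two geometric series gives the left side of \eqref{2.1} as
\[
b^p+(1-b^2)\frac{b^{q-1}r^{qk}}{1-b^qr^{qk}}+\left(\frac{1}{1+b}+\frac{r^{qk}}{1-r^{qk}}\right)\frac{(1-b^2)^2b^{2q-2}r^{2qk}}{1-b^{2q}r^{2qk}}.
\]
Write this as $1+(1-b)G(b,r)$. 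As $b\to1^-$, we have $(b^p-1)/(1-b)\to -p$, the second term divided by $1-b$ tends to $2r^{qk}/(1-r^{qk})$, and the third term is $O((1-b)^2)$. Hence
\[
\lim_{b\to1^-}G(b,r)=-p+\frac{2r^{qk}}{1-r^{qk}},
\]
which is positive for $r>R_{1,k,p,q}$. So for $b$ close to $1$ the left side of \eqref{2.1} exceeds $1$, and the radius cannot be improved. The only care needed is in this limit: checking that the quadratic term is negligible and that the $b^{q-1}$ factor does not affect the limit.
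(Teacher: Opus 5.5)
Your three steps match the paper's three steps. You reduce through Lemma~\ref{L3}, prove $\frac{1-x^p}{1-x^2}\ge\frac p2$, and use the extremal $f(l_{z_0}(z))u$, and your sharpness computation is correct. Your argument for the inequality via $g'(x)=p(x^{p-1}-x)\ge 0$ is better than the paper's bare claim that the quotient is decreasing.

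The gap is the first step, which you take on trust. Lemma~\ref{L3} is false for $n\ge 2$, and so is the theorem.

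Applying Lemma~\ref{L2} with $N=1$ to each component $f_j$ bounds $\sum_s|(P_s)_j(z_0)|\rho^s+(\cdots)\sum_s|(P_s)_j(z_0)|^2\rho^{2s}$ only for each \emph{fixed} $j$. But $\|P_s\|_\infty=\max_j|(P_s)_j|$ puts the maximum inside the sum over $s$, and different $s$ can pick out different components. So $\sum_s\max_j$ is not controlled by $\max_j\sum_s$; this is exactly the interchange made at \eqref{l2} in the paper. Because the scalar bound is attained by $f(\zeta)=\zeta$, a second component active in another degree already breaks it.

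Take $X=\mathbb{C}$, $n=2$, $F(z)=(z,z^2)$, so $a=0$ and $|a_1|=|a_2|=\|a\|_\infty$, with $q=k=1$ and $\mu_1(z)=z$. The left side of \eqref{eq:lemma3} exceeds the right side by $\frac{r^2(1-r+r^2)}{1-r}>0$ for every $r\in(0,1)$. For $p=2$ we have $R_{1,1,2,1}=\frac12$, and the left side of \eqref{2.1} at $z=\frac12$ is
\[
0+\frac12+\frac14+2\Bigl(\frac14+\frac1{16}\Bigr)=\frac{11}{8}>1 .
\]

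Your argument does prove two weaker statements.
\begin{enumerate}
\item[(i)] The vector statement with the quadratic refinement term removed holds, with the same sharp radius. It needs only the coefficientwise bound $\|P_s(z_0)\|_\infty\le 1-x^2$; your elementary inequality and your extremal, whose components are all proportional, then go through unchanged.
\item[(ii)] The scalar case $n=1$ holds, where Lemma~\ref{L3} is true but needs a different argument. Set $w_0=\mu_k(z)/\|\mu_k(z)\|$. Averaging $\zeta\mapsto F(\omega\zeta w_0)$ over the $q$-th roots of unity $\omega$ shows that $h(\eta)=a+\sum_j P_{qj}(w_0)\eta^j$ is a self-map of $\mathbb{U}$. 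Apply Lemma~\ref{L2} with $N=1$ to $h$ at radius $r^{qk}$. Then use $\|\mu_k(z)\|\le r^k$ and the nonnegativity of all coefficients to replace $\|\mu_k(z)\|$ by $r^k$.
\end{enumerate}
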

\begin{rem} In the special case where $B_Y=  \mathbb{U}$, 
Theorem~\ref{T2.2} reduces to Theorem~3.A.

\end{rem}
\begin{table}[H]
\centering
\begin{minipage}[b]{0.45\textwidth}
\centering
\begin{tabular}{|c|c|c|c|}
\hline
\textbf{k} & \textbf{p} & \textbf{q} & \textbf{$R_{1,k,p,q}$} \\ 
\hline
1 & 1 & 1& 0.333300 \\ 
\hline
2 & 3 & 3& 0.918400 \\ 
\hline
3 & 5 & 4 & 0.972400 \\ 
\hline
4 & 7 & 5& 0.987500 \\ 
\hline
\end{tabular}
\caption{$R_{1,k,p,q}$ is the unique root of equation (\ref{2.2}) in $(0,1)$.}
\label{tab3.4}
\end{minipage}%
\hfill
\begin{minipage}[b]{0.55\textwidth}
\centering
\begin{figure}[H]
\centering
\includegraphics[width=\textwidth]{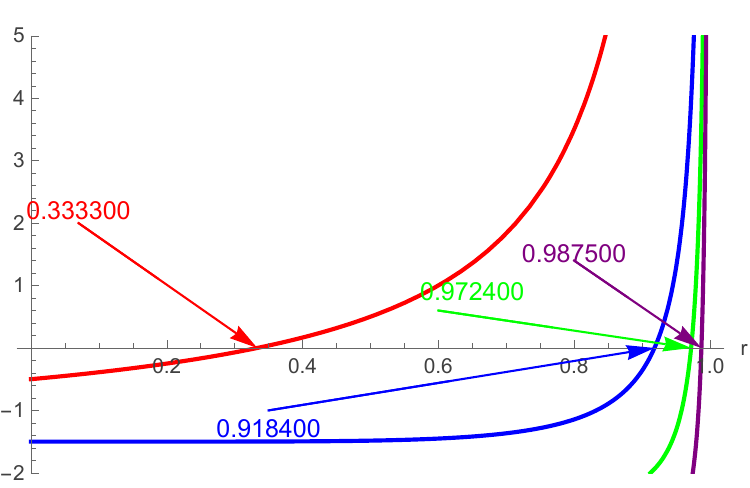}
\caption{The graphs exhibit the locations of the roots $R_{1,k,p,q}$ in $(0,1)$ for different values of $k,p,q.$}
\label{f34}
\end{figure}
\end{minipage}
\end{table}

\begin{proof}[{\bf Proof of Theorem \ref{T1.2}}]
Let $z\in B_X\setminus\{0\}$ be fixed and define $z_0=\frac{z}{\|z\|_X}\in \partial B_X.$
We consider the holomorphic functions $f_j : \mathbb{U} \to \overline{\mathbb{U}}$ defined by
\[
f_j(\zeta) = F_j(\zeta z_0), \qquad \zeta \in \mathbb{U}
\]
where $F=(F_1,\ldots,F_n)$. Then 
\begin{equation}\label{eq:fexp}
f_j(\zeta)=a_j+\sum_{s=1}^{\infty}(P_s)_j(z_0)\zeta^s,
\qquad \zeta\in\mathbb{U}.
\end{equation}
Hence, for each fixed $j$ (see, e.g., \cite[p.~35]{GK2003}),
$|(P_s)_j(z_0)|\le 1-|a_j|^2=1-\|a\|_{\infty}^2.$
This yields
\begin{equation}\label{eq:Ps}
\|P_s(z_0)\|_{\infty}\le 1-\|a\|_{\infty}^2 .
\end{equation}

Let $x=\|a\|_{\infty}\in[0,1]$. If $x=1$, then $P_s(z_0)=0$ for all $s\ge1$, and
\eqref{2.1} holds trivially. Hence, we assume $x\in[0,1)$.
Now applying Lemma~\ref{L3}, we obtain
\begin{align}
\mathcal{A}_f(z):=&\|a\|_{\infty}^{p}
+\sum_{j=1}^{\infty}\|P_{qj}(\mu_k(rz_0))\|_{\infty}
+\left(\frac{1}{1+\|a\|_{\infty}}+\frac{r^{qk}}{1-r^{qk}}\right)
\sum_{j=1}^{\infty}\|P_{qj}(\mu_k(rz_0))\|_{\infty}^2\nonumber\\ 
&\le x^p+(1-x^2)\frac{r^{qk}}{1-r^{qk}} \nonumber\\
&=1+(1-x^2)D_4(x,r),
\label{eq:G1}
\end{align}
where
\[
D_4(x,r)
=\frac{r^{qk}}{1-r^{qk}}
-\frac{1-x^p}{1-x^2}.
\]

Suppose that $A(x)=\frac{1-x^p}{1-x^2}$. Since $A(x)$ is decreasing on $[0,1)$ for $p\in(0,2]$, we have
\[
A(x)\ge \lim_{x\to1^-}A(x)=\frac{p}{2}.
\]
Consequently,
\[
D_4(x,r)\le D_4(1,r)
=\frac{r^{qk}}{1-r^{qk}}
-\frac{p}{2}.
\]
Thus $\mathcal{A}_f(z)\le1$ whenever $D_4(1,r)\le0$, which holds for
$r\le R_{1,k,p,q}$, where $R_{1,k,p,q}$ is the smallest positive root in $(0,1)$ of
\begin{equation}\label{eq:R1}
\frac{r^{qk}}{1-r^{qk}}
-\frac{p}{2}=0.
\end{equation}
Let $\Phi(r)=\frac{r^{qk}}{1-r^{qk}}-\frac{p}{2}$. Clearly $\Phi'(r)>0$, this shows that $\Phi(r)$ is increasing in $(0,1)$. Note that 
\[\Phi(0)=-\frac{p}{2}\;\;\text{and}\;\;\lim_{r\rightarrow 1^-}\Phi(r)=+\infty.\]
Therefore $R_{1,k,p,q}$ is the unique  root in $(0,1)$ of $\Phi(r)=0$.

\medskip

{\bf Optimality of the Radius $R_{1,k,p,q}$:} 
We now show that the radius $R_{1,k,p,q}$ is sharp.
Fix $z_0\in\partial B_X$ and $u=(u_1,\ldots,u_n)\in\partial\mathbb{U}_n$ with
$|u_1|=\cdots=|u_n|=1$.
For $b\in(0,1)$, define
\begin{equation}\label{F1}
F_1(z)=f(l_{z_0}(z))u,\qquad z\in B_X,
\end{equation}
where
\[
f(\zeta)=\frac{b-\zeta}{1-b\zeta},\qquad \zeta\in\mathbb{U},
\]
and $l_{z_0}\in T(z_0)$.
Let $\mu_m(z)=-l_{z_0}(z)^{\,m-1}z$ for $m\geq 1$.
Then, for $r\in(0,1)$,
\[
F_1(rz_0)=\frac{b-r}{1-br}\,u
= a+\sum_{n=1}^{\infty}P_n(rz_0),
\]
where $a=bu$ and $P_n(z_0)=-b^{\,n-1}(1-b^2)u, n\ge 1.$
Consequently,
\[
P_n(\mu_k(rz_0))
=-r^{kn}b^{\,n-1}(1-b^2)u.
\]

Thus we have 
\begin{align*}
\mathcal{A}_f(z):=&\|a\|_{\infty}^{p}
+\sum_{j=1}^{\infty}\|P_{qj}(\mu_k(rz_0))\|_{\infty}
+\left(\frac{1}{1+\|a\|_{\infty}}+\frac{r^k}{1-r^k}\right)
\sum_{j=1}^{\infty}\|P_{qj}(\mu_k(rz_0))\|_{\infty}^2 \\
=&1+(1-b^2)G_4(b,r),
\end{align*}
where
\[
G_4(b,r)
=\frac{ b^{q-1} r^{qk}}{1-b^{q} r^{qk}}
+\left(\frac{1}{1+b}+\frac{r^{qk}}{1-r^{qk}}\right)
\frac{(1-b^2)}{b^{2}}\,
\frac{b^{2q} r^{2qk}}{1-b^{2q} r^{2qk}}
-\frac{1-b^p}{1-b^2}.
\]
Letting $b\to1^-$, we obtain
\[
\lim_{b\to1^-}G_4(b,r)
=\frac{r^{qk}}{1-r^{qk}}
-\frac{p}{2}>0,
\qquad r>R_{1,k,p,q}.
\]
Hence, for $r>R_{1,k,p,q}$ and $b$ sufficiently close to $1$, we have
$\mathcal{A}_f(z)>1$.
This proves that the radius $R_{1,k,p,q}$ is best possible.
\end{proof}
The following example shows that the assumption $|a_j|=\|a\|_{\infty}$ for all $j\in\{1,2,\ldots,n\}$
cannot be omitted in the above result.

\begin{exm}\label{ex1}
Let $n=2$, $z_{0}\in \partial B_{X}$ and $l_{z_{0}}\in T(z_{0})$. 
Let $\alpha,\beta\in \mathbb{R}$ satisfy $\frac{1}{\sqrt{3}}<\alpha<\beta<1.$
For $m\ge 1$, define $\mu_m(z)=-l_{z_{0}}(z)^{\,m-1}z .$
Define the holomorphic mapping $F:B_{X}\to \overline{\mathbb{U}}^{2}$ by
\bea\label{E1}
F(z)=\left(
\frac{\alpha-l_{z_{0}}(z)}{1-\alpha l_{z_{0}}(z)},
\frac{\beta-l_{z_{0}}(z)}{1-\beta l_{z_{0}}(z)}
\right), \qquad z\in B_{X}.
\eea
Then $a=P_{0}(z)=F(0)=(\alpha,\beta),$
and for $s\ge 1$, $P_{s}(rz_{0})
=-\bigl((1-\alpha^{2})\alpha^{s-1}r^{s},\,
(1-\beta^{2})\beta^{s-1}r^{s}\bigr).$
Consequently, $P_{s}(\mu_{k}(rz_{0}))
=-\bigl((1-\alpha^{2})\alpha^{s-1}r^{ks},\,
(1-\beta^{2})\beta^{s-1}r^{ks}\bigr),$
and hence $
\|P_{s}(\mu_{k}(rz_{0}))\|_{\infty}
=(1-\alpha^{2})\alpha^{s-1}r^{ks},  s=1,2,$
while $\|P_{s}(\mu_{k}(rz_{0}))\|_{\infty}
\ge (1-\beta^{2})\beta^{s-1}r^{ks},  s\ge 3.$\par

Hence,
\begin{align*}
\mathcal{A}_{f}(rz_{0})
&=\|a\|_{\infty}^{p}
+\sum_{j=1}^{\infty}\|P_{qj}(\mu_{k}(rz_{0}))\|_{\infty}
+\frac{1+\|a\|_{\infty}r^{qk}}{(1-r^{qk})(1+\|a\|_{\infty})}
\sum_{j=1}^{\infty}\|P_{qj}(\mu_{k}(rz_{0}))\|_{\infty}^{2}  \\
&\ge \beta^{p}
+(1-\alpha^{2})(r^{qk}+\alpha r^{2qk})
+(1-\beta^{2})\sum_{j=3}^{\infty}\beta^{j-1}r^{kqj} \\
&\quad +\frac{1+\beta r^{qk}}{(1-r^{qk})(1+\beta)}
\left((1-\alpha^{2})^{2}(r^{2qk}+\alpha^{2}r^{4qk})
+(1-\beta^{2})^{2}\sum_{j=3}^{\infty}\beta^{2qj-2}r^{2qkj}\right).
\end{align*}
Evaluating the above series, we obtain
\begin{align*}
\mathcal{A}_{f}(rz_{0})
&\ge \beta^{p}
+(1-\alpha^{2})(r^{qk}+\alpha r^{2qk})
+(1-\beta^{2})\frac{\beta^{2}r^{3qk}}{1-\beta r^{qk}} \\
&\quad +\frac{1+\beta r^{qk}}{(1-r^{qk})(1+\beta)}
\left((1-\alpha^{2})^{2}(r^{2qk}+\alpha^{2}r^{4qk})
+(1-\beta^{2})^{2}\frac{\beta^{4}r^{6qk}}{1-\beta^{2}r^{2qk}}\right).
\end{align*}

Finally, letting $\beta\to 1^{-}$, for each $r\in(0,1)$ we obtain
\[
\mathcal{A}_{f}(rz_{0})
\ge 1
+(1-\alpha^{2})(r^{qk}+\alpha r^{2qk})
+\frac{1+r^{qk}}{2(1-r^{qk})}(1-\alpha^{2})^{2}
(r^{2qk}+\alpha^{2}r^{4qk})
>1.
\]

\end{exm}

\section{{\bf Bohr radius for holomorphic mappings with values in the closure of the unit ball of a JB$^*$-triple}}

In this section, we study the Bohr radius for holomorphic mappings defined on the unit ball of a complex Banach space and taking values in \(\ol{\mathbb{B}_Y}\), where \(\mathbb{B}_Y\) denotes the unit ball of a JB$^*$-triple \(Y\). For the definition and basic properties of JB$^*$-triples, we refer the reader to, for example, \cite{Chu2012,Chu2021}. Let \(\mathrm{Aut}(\mathbb{B}_Y)\) denote the group of all biholomorphic self-maps of \(\mathbb{B}_Y\). It is well known that \(\mathrm{Aut}(\mathbb{B}_Y)\) acts transitively on \(\mathbb{B}_Y\).\par

The following result concerning the Bohr--Rogosinski radius for holomorphic mappings with values in the closure of the unit ball of a JB$^{*}$-triple and involving Schwarz mappings was first established in \cite[Theorem~3.3]{HHK2025}. It reads as follows.
\begin{theo4.A}\cite[Theorem 3.3]{HHK2025}
Let $B_X$ be the unit ball of a complex Banach space $X$ and let $B_Y$ be the unit ball of a JB$^{*}$-triple $Y$. Let $F$ be a holomorphic mapping from $B_X$ to $B_Y$ with
\[
F(z)=a+\sum_{s=1}^{\infty} P_s(z), \qquad z\in B_X,
\]
where $P_s(z)=\frac{1}{s!}D^sF(0)(z^s).$
Assume that $a\in B_Y$. Then, for $m,N\in\mathbb{N}$ and $p\in(0,2]$, we have
\[
\|F(v(z))\|^{p}
+
\sum_{s=N}^{\infty}
\frac{\|D\varphi_a(a)P_s(z)\|}
     {\|D\varphi_a(a)\|}
\le 1
\]
for $\|z\|=r\le R_{m,N}^{p}$, where $v:B_X\to B_X$ is a Schwarz mapping having $z=0$ as a zero of order $m$, $\varphi_a\in \operatorname{Aut}(B_Y)$ satisfies $\varphi_a(a)=0$, and $R_{m,N}^{p}$ is the unique root in $(0,1)$ of the equation
\[
p\left(\frac{1-r^{m}}{1+r^{m}}\right)
-\frac{2r^{N}}{1-r}
=0.
\]
Moreover, the number $R_{m,N}^{p}$ cannot be improved.
\end{theo4.A}
In view of the following lemma, we obtain the Bohr radius for holomorphic
mappings with values in the closure of the unit ball of a JB$^*$-triple by replacing
$\|P_s(z)\|$ with
\[
\frac{\|D\phi_a(a)P_s(z)\|}{\|D\phi_a(a)\|},
\]
where $\phi_a \in \mathrm{Aut}(\mathbb{B}_Y)$ satisfies $\phi_a(a)=0$.
\begin{lem}\label{lem:3.2}\cite[Lemma 3.1]{HH2024}
Let $F:\mathbb{U} \to \ol{\mathbb{B}_Y}$ be a holomorphic mapping of the form
\[
F(\zeta) = a + \sum_{s=1}^{\infty} P_s(\zeta), \qquad \zeta \in \mathbb{U},
\]
where $P_s(\zeta) = \frac{1}{s!} D^s F(0)(\zeta^s).$ Assume that $a \in B_Y$. Then
\[
\frac{\|D\phi_a(a) P_s(\zeta)\|}{\|D\phi_a(a)\|} \le 1 - \|a\|^2, 
\qquad s \ge 1,\ \zeta \in \mathbb{U},
\]
where $\phi_a \in \mathrm{Aut}(\mathbb{B}_Y)$ satisfies $\phi_a(a)=0$.
\end{lem}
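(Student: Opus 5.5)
The plan is to reduce the statement to the one-variable case along complex lines through the origin, and then transport to the centre of the ball by the automorphism $\phi_a$. Fix $\zeta_0\in\mathbb{U}$; if $\zeta_0=0$ the estimate is trivial, since $P_s(0)=0$ for $s\ge1$. Otherwise, since $P_s$ here is a homogeneous polynomial on $\mathbb{C}$, we have $P_s(\zeta)=\zeta^s c_s$ with $c_s=\frac{1}{s!}D^sF(0)(1^s)\in Y$. It therefore suffices to prove $\|D\phi_a(a)c_s\|\le(1-\|a\|^2)\|D\phi_a(a)\|$, because $|\zeta|^s\le1$.

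Consider the composition $G=\phi_a\circ F$. Here $F$ maps into $\ol{\mathbb{B}_Y}$ and $F(0)=a\in \mathbb{B}_Y$. By the maximum principle for holomorphic maps into the closed ball of a JB$^*$-triple, $F(\mathbb{U})\subset\mathbb{B}_Y$: the set $F^{-1}(\mathbb{B}_Y)$ is nonempty, and one can argue via the fact that $\phi_a$ extends holomorphically to a neighbourhood of $\ol{\mathbb{B}_Y}$. Then $G:\mathbb{U}\to\ol{\mathbb{B}_Y}$ is holomorphic with $G(0)=0$, and the vector-valued Schwarz lemma / Cauchy estimate gives $\|\frac{1}{s!}G^{(s)}(0)\|\le1$. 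For $s=1$ this yields $D\phi_a(a)c_1=G'(0)$, hence $\|D\phi_a(a)c_1\|\le1$.

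For $s\ge2$, expand $G$. Write $\phi_a(a+w)=D\phi_a(a)w+\sum_{j\ge2}Q_j(w)$ and substitute $w=\sum c_s\zeta^s$. The coefficient of $\zeta^s$ is then $D\phi_a(a)c_s$ plus terms built from lower $c_j$. To kill those cross terms I would use the standard rotation trick: take $\omega=e^{2\pi i/s}$ and average $F(\omega^j\zeta)$ over $j$. This gives $F_s(\zeta)=a+\sum_{\ell\ge1}c_{\ell s}\zeta^{\ell s}$, which still maps into $\ol{\mathbb{B}_Y}$ by convexity of the ball. Writing $F_s(\zeta)=h(\zeta^s)$, the function $h$ is holomorphic $\mathbb{U}\to\ol{\mathbb{B}_Y}$ with $h(0)=a$ and $h'(0)=c_s$. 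Applying the $s=1$ case to $h$ reduces everything to bounding $\|D\phi_a(a)h'(0)\|$.

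The key remaining step, and the main obstacle, is upgrading $\|D\phi_a(a)h'(0)\|\le1$ to $\le(1-\|a\|^2)\|D\phi_a(a)\|$. For this I would invoke the explicit structure of JB$^*$-triple automorphisms: $D\phi_a(a)=B(a,a)^{-1/2}$, where $B$ is the Bergman operator, and $\|B(a,a)^{-1/2}\|=(1-\|a\|^2)^{-1}$. Granting these known facts, $\|D\phi_a(a)h'(0)\|\le1=(1-\|a\|^2)\|D\phi_a(a)\|$, which gives the claim for all $s\ge1$. Verifying or citing the norm identity for $B(a,a)^{-1/2}$ (from Chu's monograph) is where I expect the real work to lie.
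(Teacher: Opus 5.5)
The paper gives no proof of this lemma; it only quotes it from \cite{HH2024}, so there is no internal argument to compare against. Judged on its own, your proof is correct and follows the standard route. The averaging trick turns $c_s$ into the first coefficient of $h$. The Cauchy estimate for $\phi_a\circ h$ then gives $\|D\phi_a(a)c_s\|\le 1$. Dividing by $\|D\phi_a(a)\|\ge(1-\|a\|^2)^{-1}$ finishes the argument.

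Three points should be tightened.

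\emph{First}, your justification that $F(\mathbb{U})\subset\mathbb{B}_Y$ is too vague: nonemptiness of $F^{-1}(\mathbb{B}_Y)$ proves nothing by itself. Argue with functionals instead. For every norm-one $\ell\in Y^*$, the scalar function $\ell\circ F$ maps $\mathbb{U}$ into $\overline{\mathbb{U}}$ with $|\ell(a)|\le\|a\|<1$. Hence $|\ell(F(\zeta))|\le(\|a\|+|\zeta|)/(1+\|a\||\zeta|)<1$, and taking the supremum over $\ell$ gives $\|F(\zeta)\|<1$ (this is Lemma~\ref{L1} for the closed ball). The same argument applies to $h$. Alternatively, use the holomorphic extension of $\phi_a$ past $\overline{\mathbb{B}_Y}$ explicitly.

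\emph{Second}, $D\phi_a(a)=B(a,a)^{-1/2}$ holds only for the canonical M\"obius map $g_{-a}$. In general, Cartan's uniqueness theorem gives $\phi_a=T\circ g_{-a}$ with $T$ a surjective linear isometry. This changes no norms, so the quotient in the lemma does not depend on the choice of $\phi_a$; you should say so.

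\emph{Third}, the step you flag as the real work is easier than you expect. You only need the lower bound $\|D\phi_a(a)\|\ge(1-\|a\|^2)^{-1}$, and it holds for an automorphism of the unit ball of any Banach space.
\begin{itemize}
\item For $a\neq0$, let $\psi=\phi_a^{-1}$, $v=a/\|a\|$, and let $\ell$ be a norm-one functional with $\ell(a)=\|a\|$.
\item Then $u=\ell\circ\psi$ maps $\mathbb{B}_Y$ into $\mathbb{U}$ with $u(0)=\|a\|$, so Schwarz--Pick along complex lines gives $\|Du(0)\|\le1-\|a\|^2$.
\item Since $D\psi(0)\,D\phi_a(a)=I$, we have $Du(0)\bigl(D\phi_a(a)v\bigr)=\ell(v)=1$.
\item Hence $1\le(1-\|a\|^2)\|D\phi_a(a)v\|$.
\item For $a=0$, take any unit $v$ and a norming functional for $v$.
\end{itemize}
So the identity $\|B(a,a)^{-1/2}\|=(1-\|a\|^2)^{-1}$, though true, is not needed. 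The JB$^*$-structure enters only through the existence of $\phi_a$.
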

Motivated by Theorem 4.A, we now establish the following improved version involving
the even homogeneous terms of the expansion composed with generalized Schwarz mappings.
This result provides a sharper Bohr-Rogosinski phenomenon under the prescribed assumptions.
\begin{theo}\label{thm:3.3}
Let $B_X$ be the unit ball of a complex Banach space $X$ and $\mathbb{B}_Y$ the unit ball
of a JB$^*$-triple $Y$. Let $F: B_X \to \overline{\mathbb{B}_Y}$ be a holomorphic mapping of the form
\[
F(z) = a + \sum_{s=1}^{\infty} P_s(z), \qquad z \in B_X,
\]
where $P_s(z) = \frac{1}{s!} D^s F(0)(z^s).$ Assume that $a \in \mathbb{B}_Y$. Then for $m, N \in \mathbb{N}$ and $p \in (0,2]$, we have
\[
\|F(\mu_m(z))\|^p + \sum_{s=1}^{\infty} \frac{\|D\phi_a(a) P_{2s}(\mu_k(z))\|}{\|D\phi_a(a)\|}
\le 1,
\]
for $\|z\| = r \le R_{1,k,m,p}$, where $v: B_X \to B_X$ is a Schwarz mapping having
$z=0$ as a zero of order $m$, $\phi_a \in \mathrm{Aut}(\mathbb{B}_Y)$ satisfies $\phi_a(a)=0$,
and $R_{1,k,m,p}$ is the unique root in $(0,1)$ of equation (\ref{e2.2}). Moreover,
$R_{1,k,m,p}$ is sharp.
\end{theo}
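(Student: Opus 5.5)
The plan is to copy the proof of Theorem~\ref{T2.1}, replacing the coordinatewise coefficient bound by Lemma~\ref{lem:3.2}.

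\textbf{Reduction to the disc.} Fix $z\in B_X\setminus\{0\}$ and put $z_0=z/\|z\|_X$. Define $f(\zeta)=F(\zeta z_0)$ for $\zeta\in\mathbb{U}$. Then $f:\mathbb{U}\to\overline{\mathbb{B}_Y}$ is holomorphic, and $f(\zeta)=a+\sum_{s\ge1}P_s(z_0)\zeta^s$, where $a\in\mathbb{B}_Y$. Applying Lemma~\ref{lem:3.2} to $f$ gives
\[
\frac{\|D\phi_a(a)P_{s}(z_0)\|}{\|D\phi_a(a)\|}\le 1-\|a\|^2, \qquad s\ge1 .
\]
Homogeneity of $P_{2s}$ then gives, for any $w=\rho z_0$,
\[
\sum_{s\ge1}\frac{\|D\phi_a(a)P_{2s}(w)\|}{\|D\phi_a(a)\|}\le (1-\|a\|^2)\frac{\|w\|^2}{1-\|w\|^2}.
\]
Since $\mu_k(z)$ lies on its own ray, the same estimate applies with $w=\mu_k(z)$. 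Using \eqref{eq:Schwarz} and the fact that $x\mapsto x/(1-x)$ is increasing, we obtain the bound $(1-\|a\|^2)\,r^{2k}/(1-r^{2k})$.

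\textbf{Growth term.} Lemma~\ref{L1} requires values in the open ball $\mathbb{B}_Y$. If $F(B_X)$ meets the boundary, apply Lemma~\ref{L1} to $tF$ and let $t\to1^-$; alternatively, apply the Schwarz--Pick estimate to $\phi_a\circ F$ restricted to slices. Either way one gets $\|F(w)\|\le(\|a\|+\|w\|)/(1+\|a\|\|w\|)$. Monotonicity of $\phi(t)=(x+t)/(1+xt)$ and $\|\mu_m(z)\|\le r^m$ then give, with $x=\|a\|\in[0,1)$,
\[
\|F(\mu_m(z))\|^p+\sum_{s\ge1}\frac{\|D\phi_a(a)P_{2s}(\mu_k(z))\|}{\|D\phi_a(a)\|}\le \Big(\frac{x+r^m}{1+xr^m}\Big)^p+(1-x^2)\frac{r^{2k}}{1-r^{2k}}=1+D_3(x,r).
\]
Lemma~\ref{L4}, applied with $\varphi_0\equiv1$ and $N(r)=r^{2k}/(1-r^{2k})$, shows that $D_3\le0$ for $r\le R_{1,k,m,p}$. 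Here $R_{1,k,m,p}$ is the unique root of \eqref{e2.2}. Uniqueness follows because the left side of \eqref{e2.2} is strictly decreasing, equals $p>0$ at $r=0$, and tends to $-\infty$ as $r\to1^-$.

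\textbf{Sharpness.} This is the step I expect to be the main obstacle, because the quantity $\|D\phi_a(a)P_s\|/\|D\phi_a(a)\|$ must be computed explicitly. I would take $Y$ to contain a minimal tripotent, or more simply pick $u\in\partial\mathbb{B}_Y$ with $\mathbb{C}u$ a one-dimensional JB$^*$-subtriple. Set
\[
F(z)=\frac{b-l_{z_0}(z)}{1-b\,l_{z_0}(z)}\,u, \qquad \mu_n(z)=-l_{z_0}(z)^{n-1}z,
\]
so that $a=bu$ and $P_s(z_0)=-b^{s-1}(1-b^2)u$. For $a=bu$ with $u$ a tripotent, the M\"obius map $\phi_a$ acts on $\mathbb{C}u$ as the disc automorphism, and $D\phi_a(a)$ restricted to $\mathbb{C}u$ is multiplication by $(1-b^2)^{-1}$. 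The Bergman-operator formula $D\phi_a(a)=B(a,a)^{-1/2}$ shows that $\|D\phi_a(a)\|=(1-b^2)^{-1}$, attained on the Peirce-2 space. Hence the ratio equals $\|P_s\|$, and the quantity reduces to
\[
\Big(\frac{b+r^m}{1+br^m}\Big)^p+\frac{b(1-b^2)r^{2k}}{1-b^2r^{2k}} .
\]
This is exactly the expression in the sharpness part of Theorem~\ref{T2.1}. Letting $b\to1^-$ gives $1+(1-b)G_3(b,r)$ with $\lim_{b\to1^-}G_3(b,r)=-p\frac{1-r^m}{1+r^m}+\frac{2r^{2k}}{1-r^{2k}}>0$ for $r>R_{1,k,m,p}$, which proves that $R_{1,k,m,p}$ cannot be improved.
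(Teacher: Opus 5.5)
Your proposal is correct and follows the paper's proof essentially step for step. It uses the same ingredients: the slice reduction with Lemma~\ref{lem:3.2}, the growth bound from Lemma~\ref{L1} and \eqref{eq:Schwarz}, Lemma~\ref{L4} with $\varphi_0\equiv 1$ and $N(r)=r^{2k}/(1-r^{2k})$, and the same extremal map $\frac{b-\ell_{z_0}(z)}{1-b\,\ell_{z_0}(z)}\,u$ with $\mu_n(z)=-\ell_{z_0}(z)^{n-1}z$. Your $tF$ limit (needed because $F$ may touch $\partial\mathbb{B}_Y$) and your Bergman-operator evaluation of $\|D\phi_a(a)P_s\|/\|D\phi_a(a)\|$ supply justifications the paper skips. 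The tripotent restriction on $u$ is in fact unnecessary: for any $u\in\partial\mathbb{B}_Y$, local Gelfand theory (the closed subtriple generated by $u$ is isometrically $C_0(S)$ with $1\in S\subset(0,1]$ and $u$ corresponding to $t\mapsto t$) gives $\|B(bu,bu)^{-1/2}u\|=\sup_{t\in S}t/(1-b^2t^2)=(1-b^2)^{-1}=\|B(bu,bu)^{-1/2}\|$. This is what the paper's arbitrary choice of $u$ tacitly uses, and it also covers JB$^*$-triples without nonzero tripotents, such as $C_0(\mathbb{R})$.
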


\begin{proof}
Fix $z_0 \in \partial B_X$ and define $f(\zeta) = F(\zeta z_0), \zeta \in \mathbb{U}.$
Then $f \in H(\mathbb{U}, \ol{\mathbb{B}_Y})$ and
\[
f(\zeta) = a + \sum_{s=1}^{\infty} P_s(z_0) \zeta^s, \qquad \zeta \in \mathbb{U}.
\]
Since $a = f(0) \in \mathbb{B}_Y$, Lemma~\ref{lem:3.2} implies
\begin{equation}\label{eq:3.1}
\frac{\|D\phi_a(a) P_s(z_0)\|}{\|D\phi_a(a)\|} \le 1 - \|a\|^2, \qquad s \ge 1.
\end{equation}

Let $x = \|a\| \in [0,1)$. For $r \in (0,1)$, using Lemma \ref{L1} together with
estimates (\ref{eq:Schwarz}) and \eqref{eq:3.1}, we obtain
\beas
&&\|F(\mu_m(r z_0))\|^p + \sum_{s=1}^{\infty} \frac{\|D\phi_a(a) P_{2s}(\mu_k(r z_0))\|}{\|D\phi_a(a)\|}\\
&\le & \left(\frac{x + r^m}{1 + x r^m}\right)^p + (1 - x^2) \sum_{s=1}r^{2sk}\\
&=&\left(\frac{x + r^m}{1 + x r^m}\right)^p + (1 - x^2) \frac{r^{2k}}{1 - r^{2k}}\\
&=& 1+\left(\frac{x + r^m}{1 + x r^m}\right)^p-1+ (1 - x^2) \frac{r^{2k}}{1 - r^{2k}}\\
&=&1+D_5(x,r),
\eeas
By Lemma \ref{L4}, we deduce that $D_{5}(x,r)\leq 0$, for $0\leq r\leq R_{1,k,m,p}$,
where $R_{1,k,m,p}$ is the minimal positive root in $(0,1)$ of the equation $p\left(\frac{1-r^{m}}{1+r^{m}} \right)- \frac{2r^{2k}}{1-r^{2k}}=0.$\par

We next verify that the radius $R_{1,k,m,p}$ cannot be improved.

To prove sharpness, fix $z_0 \in \partial B_X$ and $u \in \partial \mathbb{B}_Y$. For
$\lambda \in (0,1)$, define
\[
F(z) = f(\ell_{z_0}(z)) u, \qquad z \in B_X,
\]
where
\[
f(\zeta) = \frac{\lambda - \zeta}{1 - \lambda \zeta}, \qquad \zeta \in \mathbb{U},
\]
and $\ell_{z_0} \in T(z_0)$. Let $\mu_k(z) = - \ell_{z_0}(z)^{m-1} z$. Then
\[
\|F(\mu_m(r z_0))\|^p + \sum_{s=1}^{\infty} \frac{\|D\phi_a(a) P_{2s}(\mu_k(r z_0))\|}{\|D\phi_a(a)\|}
= \left(\frac{\lambda + r^m}{1 + \lambda r^m}\right)^p + (1 - \lambda^2) \frac{\lambda  r^{2k}}{1 - \lambda^2 r^{2k}}.
\]
As in the proof of Theorem \ref{T2.1}, for each $r \in (R_{1,k,m,p}, 1)$ there exists
$\lambda \in (0,1)$ such that
\[
\|F(\mu_m(r z_0))\|^p + \sum_{s=1}^{\infty} \|P_{2s}(\mu_k(r z_0))\| > 1.
\]
Hence, $R_{1,k,m,p}$ is optimal. This completes the proof.
\end{proof}

\begin{rem} In the special case where $B_X = \mathbb{B}_Y = \mathbb{U}$, $\mu_m(z)=\mu_k(z)=z$, and $p=1$, 
Theorem~\ref{thm:3.3} coincides with Theorem~1.A.

\end{rem}
\section*{{\bf Declarations}}

\subsection*{Funding}
The second author acknowledge financial support from the Council of Scientific and Industrial Research (CSIR), New Delhi, India, under Grant Nos. 09/1224(16975)/2023-EMR-I.

\subsection*{Data Availability Statement}
Data sharing is not applicable to this article as no datasets were generated or analyzed during the current study.

\subsection*{Conflict of Interest}
The authors declare that they have no conflict of interest.

\subsection*{Author Contributions}
All authors contributed equally to this work and approved the final manuscript.

\includepdf[pages=1-3]{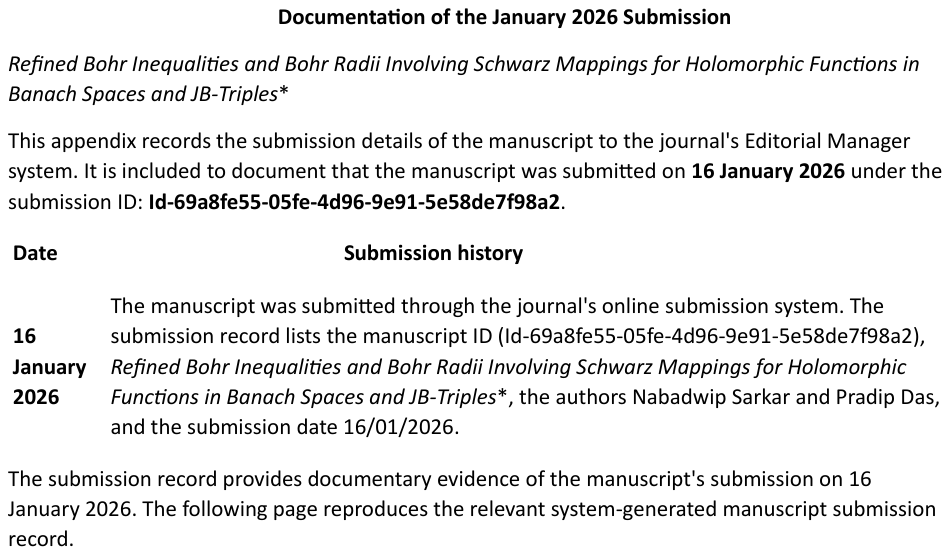}
\end{document}